\pgfplotsset{compat=1.8}
\crefname{equation}{}{}
\Crefname{equation}{Equation}{Equation}
\numberwithin{algorithm}{section}
\numberwithin{equation}{section}
\numberwithin{figure}{section}
\newcommand{\N}{\ensuremath{\mathbb{N}}}
\newcommand{\T}{\ensuremath{\mathbb{T}}}
\newcommand{\Z}{\ensuremath{\mathbb{Z}}}
\newcommand{\C}{\ensuremath{\mathbb{C}}}
\newcommand{\ii}{\textnormal{i}}
\newcommand{\e}{\textnormal{e}}
\newcommand{\norm}[1]{\left\Vert #1\right\Vert}
\newcommand{\ceil}[1]{\left\lceil#1\right\rceil}
\newcommand{\floor}[1]{\left\lfloor#1\right\rfloor}
\renewcommand{\ln}{\mathrm{ln\,}}
\newcommand{\boldx}{{\ensuremath{\boldsymbol{x}}}}
\newcommand{\boldk}{{\ensuremath{\boldsymbol{k}}}}
\newcommand{\boldh}{{\ensuremath{\boldsymbol{h}}}}
\newcommand{\boldl}{{\ensuremath{\boldsymbol{l}}}}
\newcommand{\boldz}{{\ensuremath{\boldsymbol{z}}}}
\newcommand{\boldzero}{{\ensuremath{\boldsymbol{0}}}}
\definecolor{darkgreen}{rgb}{0.0,0.5,0.0}
\definecolor{darkorange}{RGB}{255,90,0}
\DeclareMathOperator{\sgn}{sgn}
\newtheorem{theorem}{Theorem}[section]
\newtheorem{lemma}[theorem]{Lemma}
\newtheorem{remark}[theorem]{Remark}
\newtheorem{definition}[theorem]{Definition}
\newtheorem{example}[theorem]{Example}
\newtheorem{corollary}[theorem]{Corollary}
\def\imod#1{\allowbreak\mkern10mu({\operator@font mod}\,\,#1)}
\newcommand{\sspan}{\textnormal{span}}
\newcommand{\OO}[1]{\mathcal{O}\left(#1\right)}
\renewcommand{\mathbf}[1]{\ensuremath{\boldsymbol{#1}}}
\renewcommand{\textbf}[1]{{\ensuremath{\boldsymbol{#1}}}}
\title{A Deterministic Algorithm for Constructing Multiple Rank\mbox{-}1 Lattices of Near-Optimal Size\thanks{
		\textbf{Funding}: M.\ Iwen and C.\ Gross were supported in part by NSF DMS 1912706.
		L.\ K\"ammerer was supported in part by the DFG 380648269.
		T.\ Volkmer was supported in part by the SAB 100378180.
	}
}
\author{Craig Gross\thanks{Department of Mathematics, Michigan State University, East Lansing, MI 48824, USA (\href{mailto:grosscra@msu.edu}{\mbox{grosscra@msu.edu}}).}
	\and Mark A.\ Iwen\thanks{Department of Mathematics and Department of CMSE, Michigan State University, East Lansing, MI 48824, USA (\href{mailto:iwenmark@msu.edu}{iwenmark@msu.edu}).}
	\and Lutz K\"ammerer\thanks{Faculty of Mathematics, Chemnitz University of Technology, 09107 Chemnitz, Germany (\href{mailto:lutz.kaemmerer@mathematik.tu-chemnitz.de}{lutz.kaemmerer@mathematik.tu-chemnitz.de}, \href{mailto:toni.volkmer@mathematik.tu-chemnitz.de}{toni.volkmer@mathematik.tu-chemnitz.de}).}
\and Toni Volkmer\footnotemark[4]
        }
\begin{document}

\maketitle

\begin{abstract}
	In this paper we present the first known deterministic algorithm for the construction of multiple rank\mbox{-}1 lattices for the approximation of periodic functions of many variables.  The algorithm works by converting a potentially large reconstructing single rank\mbox{-}1 lattice for some $ d $-dimensional frequency set $ I \subset [N]^d $ into a collection of much smaller rank\mbox{-}1 lattices which allow for accurate and efficient reconstruction of trigonometric polynomials with coefficients in $ I $ (and, therefore, for the approximation of multivariate periodic functions).
	The total number of sampling points in the resulting multiple rank\mbox{-}1 lattices is theoretically shown to be less than $ \OO{ |I| \log^{ 2 }(N |I|) } $ with constants independent of $d$, 
	and by performing one-dimensional fast Fourier transforms on samples of trigonometric polynomials with Fourier support in $ I $ at these points, we obtain exact reconstruction of all Fourier coefficients in fewer than $ \OO{d\,|I|\log^4 (N|I|)} $ total operations.

	Additionally, we present a second multiple rank\mbox{-}1 lattice construction algorithm which constructs lattices with even fewer sampling points at the cost of only being able to reconstruct exact trigonometric polynomials rather than having additional theoretical approximation guarantees.
	Both algorithms are tested numerically and surpass the theoretical bounds.
	Notably, we observe that the oversampling factors \#samples$/|I|$ appear to grow only logarithmically in $ |I| $ for the first algorithm and appear near-optimally bounded by four in the second algorithm.

\noindent \textit{Keywords and phrases}: 
approximation of multivariate periodic functions, trigonometric polynomials, lattice rule, multiple rank-1 lattice, fast Fourier transform, sampling numbers

\noindent \textit{AMS Mathematics Subject Classification 2010}:
65T40, 42B05, 68Q17, 68Q25, 42B35, 65T50, 65Y20, 65D30, 65D32. \end{abstract}

\section{Introduction}

In this paper we consider the efficient numerical approximation of the Fourier series coefficients of a given function $f:[0,1]^d \rightarrow \C$ of $d$ variables.  More specifically, we seek cubature rules for evaluating the integrals
\begin{equation}
\widehat{f_{\boldk}} := \int_{[0,1]^d} f(\boldx) \, \e^{-2\pi\ii\boldk\cdot \boldx} ~\mathrm{d}x
\label{equ:FourierInt}
\end{equation}
for all indices $\boldk \in I \subset \Z^d$, $|I|<\infty$, which will be exact whenever $f$ is itself a multivariate trigonometric polynomial of the form
\begin{equation}
f(\boldx) = \sum_{\boldk \in I} \widehat{f_{\boldk}} \, \e^{2\pi\ii\boldk\cdot \boldx}.
\label{equ:trigPolyf}
\end{equation}
Existing numerical methods based on rank\mbox{-}1 lattice rules \cite{temlyakov1986reconstruction,KaPoVo13,Kae17,kuo2019function,plonka2018numerical} solve this problem by finding a prime number $M \in \Z$ and generating vector of integers $\boldz \in \Z^d$ such that 
\begin{equation}
\widehat{f_{\boldk}} = \frac{1}{M}\sum_{j = 0}^{M-1} f(\boldx_j ) \, \e^{\frac{-2\pi\ii \boldk \cdot \boldx_j}{M}}
= \frac{1}{M} \sum_{j = 0}^{M-1} f\left( \frac{(j \boldz)\bmod{M}}{M}\right) \, \e^{\frac{-2\pi\ii j \boldk\cdot \boldz}{M}}
,\quad \boldk\in I,
\label{equ:Rank1Cubature}
\end{equation}
holds for all $f$ as per \cref{equ:trigPolyf},
where
$\boldx_j$
are the nodes of the rank\mbox{-}1 lattice
\begin{equation*}
\Lambda(\boldz,M):=\left\{\boldx_j:=\frac{(j \boldz)\bmod{M}}{M}\colon j=0,\ldots,M-1\right\}.
\end{equation*}

One particularly attractive attribute of such cubature rules \cref{equ:Rank1Cubature} is that they allow the full set of Fourier series coefficients $\left\{ \widehat{f_{\boldk}} \colon \boldk \in I \right\}$ to be computed efficiently using a single one-dimensional FFT whenever the modulus function $m_{\boldz,M}:  I \rightarrow [M] \subseteq \Z$ given by $m_{\boldz, M}(\boldk) := \boldk\cdot\boldz\bmod{M}$ is injective, where $[M] := \{ 0, \dots, M - 1 \}$.  As a result, fast Component-By-Component (CBC) methods have been developed for constructing $\boldz$ and $M$ which guarantee that $m_{\boldz, M}$ is injective for any given $I \subset \Z^d$, $|I|<\infty$ (see, e.g., \cite{kuo2019function}).  Such CBC algorithms typically compute the image of $m_{\boldz, M}$, $m_{\boldz, M}(I) \subseteq [M] \subset \Z$, as they run. The image can then be stored and used to rapidly evaluate $m^{-1}_{\boldz, M}:  m_{\boldz, M}(I) \rightarrow I$ on demand thereafter.  Thus, rank\mbox{-}1 lattice approaches provide relatively simple and efficient $\mathcal{O}(M \log M)$-time algorithms for high dimensional Fourier approximation by effectively reducing $d$-dimensional Fourier transforms over arbitrary frequency sets $I$ to one-dimensional FFTs.

An added benefit of rank\mbox{-}1 lattice techniques is that they also trivialize the extension of fast compressive sensing \cite{foucart2013book} and best $k$-term approximation \cite{cohen2009compressed} methods developed for Fourier-sparse vector data  \cite{gilbert2014recent,merhi2019new,bittens2019deterministic,iwen2010combinatorial,Iw13,segal2013improved} to the setting of multivariate trigonometric approximation.  Suppose, for example, that the a priori unknown Fourier support set $I' := \left\{ \boldk \in I \colon \widehat{f_{\boldk}} \neq 0 \right\} \subset I$ of $f$ has 
cardinality $k := \left|I' \right| \ll \left| I \right| =: s$ so that the function $f$ in \cref{equ:trigPolyf} is Fourier $k$-sparse in~$I$.  In this case one can simply $(i)$ use a rank\mbox{-}1 lattice approach to reduce the problem of recovering $\left\{ \widehat{f_{\boldk}} \right\}_{\boldk \in I}$ to a one-dimensional sparse Fourier recovery problem aimed at finding the $k$-sparse vector $\boldsymbol{\hat f} \in \C^{|M|}$ with entries given by
\begin{equation*}{\hat f}_{l} := \begin{cases}
\widehat{f_{m^{-1}_{\boldz, M}(l)}} & \text{if } l \in m_{\boldz, M}(I'),\\
0 & \text{else},\\
\end{cases}\end{equation*}
then $(ii)$ compute a $k$-sparse approximation $\boldsymbol{a} \in \C^{M}$ of $\boldsymbol{\hat f}$ using a one-dimensional Sparse Fourier Transform (SFT) method (see e.g.~\cite{gilbert2014recent} for a survey of such methods), and finally $(iii)$ map the recovered one-dimensional $k$-sparse approximation $\boldsymbol{a}$ of $\boldsymbol{\hat f}$ back to an approximation $a:[0,1]^d \rightarrow \C$ of $f$ by setting
\begin{equation*}
a(\boldx) := \sum_{l \in m_{\boldz, M}(I)} a_{m^{-1}_{\boldz, M}(l)} \, \e^{2\pi\ii \left( m^{-1}_{\boldz, M}(l)~\cdot~ \boldx \right)} .
\end{equation*}
Using the fastest available SFT methods \cite{merhi2019new,bittens2019deterministic} one can provably achieve best $k$-term approximation guarantees of periodic functions $f$ in just $\mathcal{O}\left(k \cdot \log^{5.5}(M) \right)$-time using the strategy above provided that $m^{-1}_{\boldz, M}$ has been efficiently encoded in advance for the index set $I$ of interest.  As a result, such rank\mbox{-}1 lattice reduction approaches allow one to improve the polynomial dependence of previously existing SFT methods for functions of many variables \cite{Iw13,morotti2017explicit} on the number of variables $d$ when the index set~$I$ above is chosen to be, e.g., a hyperbolic cross as opposed to a full integer cube. 

\subsection{Results and Motivation}

Despite their many nice properties, rank\mbox{-}1 lattices suffer from the shortcoming that the prime~$M$ above must generally scale like $\Omega\left(|I|^2 \right)$ for an arbitrary index set $I$ when the modulus function $m_{\boldz,M}$ should be injective. 
Unfortunately, this limits the reduction in computational cost that one can achieve by choosing index sets $I \subset [N]^d + \boldh$ with $|I| \ll N^d$ and a fixed arbitrary shift $\boldh\in\Z^d$ in all of the methods described above.  
In this paper we propose a deterministic algorithm to address this issue by transforming a previously computed rank\mbox{-}1 lattice of size $M = \Omega\left(|I|^2 \right)$ into a set of $L = \mathcal{O}(\log |I|)$ smaller rank\mbox{-}1 lattices each of size at most\footnotemark\ $\mathcal{O}\left(|I| \log^2(N |I|) \right)$ which still collectively allow for cubature rules that exactly integrate the Fourier series coefficients in \cref{equ:FourierInt} for all indices $\boldk \in I \subset [N]^d + \boldh$ and multivariate trigonometric polynomials $f$ as per~\cref{equ:trigPolyf}.

More specifically, we provide the first known deterministic algorithm for constructing multiple rank\mbox{-}1 lattices \cite{Lutz18} for any given index set $I \subset [N]^d + \boldh$.  The proposed algorithm takes a given rank\mbox{-}1 lattice generating vector $\boldz \in [M]^d$ for $I$ as input and effectively uses it to generate new cubature formulas satisfying
\begin{equation}\label{equ:MultRank1Cubature}
\widehat{f_{\boldk}} = \frac{1}{\tilde P_{ \nu(\boldk) }} \sum_{j = 0}^{\tilde P_{\nu(\boldk)}-1} f\left( \frac{(j \boldz)\bmod{\tilde P_{\nu(\boldk)}}}{\tilde P_{\nu(\boldk)}} \right) \e^{\frac{-2\pi\ii j \boldk\cdot \boldz}{\tilde P_{\nu(\boldk)}}}
\end{equation}
for all $\boldk \in I$ and $f$ as per \cref{equ:trigPolyf}, where $\nu:  I \rightarrow [L]$ is a function which determines which of the $L$ smaller lattices we provide below should be used to reconstruct each desired Fourier series coefficient.  Note that the same generating vector $\boldz$ is used for each smaller lattice in \cref{equ:MultRank1Cubature} despite the fact that the lattice size $\tilde P_{\nu(\boldk)}$ varies with $\boldk$, as well as that \cref{equ:MultRank1Cubature} can still be computed using a one-dimensional FFT for each of the $L = \mathcal{O}(\log |I|)$ smaller lattice sizes $\tilde P_0, \dots, \tilde P_{L-1}$.  Finally, it is important to emphasize that the total number of function evaluations required by this modified cubature rule \cref{equ:MultRank1Cubature} is only\footnotemark[\value{footnote}] $\mathcal{O}(|I| \log^2(N |I|))$ as opposed to the $\mathcal{O}(|I|^2)$ function evaluations generally required by a single rank\mbox{-}1 lattice approach \cref{equ:Rank1Cubature}.

\footnotetext{These bounds are simplifications of those in \cref{lem:estimate_PqKm1,thm:not_more_than_half_of_frequencies_collide} under the mild assumptions that the dimension $ d $ and size of the original single rank\mbox{-}1 lattice $ M $ are bounded polynomially by $ \max\{|I|, N\} $.
The latter assumption holds for single rank\mbox{-}1 lattices constructed by CBC methods, cf.\ \cref{sub:analysis_of_lattice_construction}.}

\subsubsection{Main Result}

We are now ready to begin constructing the promised spatial discretization of the trigonometric polynomials in the polynomial space $\Pi_I:=\sspan\{\e^{2\pi\ii\boldk\cdot\circ}\colon\boldk\in I\}$, determined by $I\subset\Z^d$, $|I|=s$. 
The crucial assumption we begin with below is that we already know a so-called \emph{reconstructing} single rank\mbox{-}1 lattice $\Lambda(\boldz,M,I)$ in advance, i.e., a rank\mbox{-}1 lattice which fulfills the reconstruction property that the modulus function discussed above $ m_{ \boldz, M }: I \rightarrow [M] $ is injective on the frequency set $I$, which in particular implies \cref{equ:Rank1Cubature}.
See, e.g., \cite{plonka2018numerical,kuo2019function} for additional details and background about such lattices.

With this setup in hand, \cref{sec:main_result} of this paper is devoted to proving this main theorem concerning the proposed cubature rules on multiple rank\mbox{-}1 lattices.
\begin{theorem}
	\label{thm:summary_theorem}
	Let $ I\subset \Z^d$ be some frequency set with cardinality $ |I| = s $
	and expansion $N_I:=\max_{j=1,\ldots,d}\left(\max_{\boldk\in I}k_j-\min_{\boldh\in I} h_j\right)$.
	If $ \Lambda(\boldz, M, I) $ is a reconstructing single rank\mbox{-}1 lattice, then one can deterministically construct multiple rank\mbox{-}1 lattices $ \Lambda(\boldz, \tilde P_0), \ldots, \Lambda(\boldz, \tilde P_{ L - 1 }) $ such that the Fourier coefficients $ \{\widehat{f_{ \boldk }} \colon \boldk \in I\} $ of any trigonometric polynomial $ f \in \Pi_I $ of the form \cref{equ:trigPolyf} can be exactly reconstructed using only samples of $ f $ on these lattices by the cubature rule \cref{equ:MultRank1Cubature}.
	Moreover, the total number of function evaluations on these lattice points is bounded by
	\begin{equation*}
		\sum_{ \ell = 0 }^{ L - 1 } \tilde{P}_\ell \leq 
		\begin{cases}
			2 &\text{for } s = 1, \\
			6\,s\, \log_2(d N_I M) \; \ln\left(3 \frac{s}{\log_2 s} \log_2(d N_I M)\right) & \text{for } s \geq 2.
		\end{cases}
	\end{equation*}
	The total computational complexity for the construction of these rank\mbox{-}1 lattices can be bounded by 
	\[
	\OO{s^2\,\log s\, \log{dN_IM}+\;s\left(d+(\log{dN_IM})\log(\log{dN_IM})\right)},
	\]
	and the total computational complexity for reconstructing the Fourier coefficients can be bounded by 
	\[
	\OO{  s \log s\left(d + (\log s)(\log d N_I M)(\log^2 \log_s d N_I M)\right)  }.
	\]
\end{theorem}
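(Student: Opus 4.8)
The plan is to reduce the statement to a purely number-theoretic separation problem and then to solve that problem greedily, extracting $\OO{\log_2 s}$ small primes by a pigeonhole argument. For distinct $\boldk,\boldk'\in I$ put $a_{\boldk,\boldk'}:=(\boldk-\boldk')\cdot\boldz\in\Z$. Since $\Lambda(\boldz,M,I)$ is reconstructing, $m_{\boldz,M}$ is injective on $I$, so $a_{\boldk,\boldk'}\not\equiv 0\pmod M$; in particular $a_{\boldk,\boldk'}\neq 0$, and clearly $|a_{\boldk,\boldk'}|\le\sum_{j=1}^d|k_j-k'_j|\,|z_j|\le d\,N_I\,M$, so each $a_{\boldk,\boldk'}$ is a nonzero integer with at most $\log_2(dN_IM)$ distinct prime divisors. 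Next I would record the aliasing identity underlying \cref{equ:Rank1Cubature}: for any $f$ of the form \cref{equ:trigPolyf}, any $\boldk\in I$, and any positive integer $P$,
\[
\frac{1}{P}\sum_{j=0}^{P-1} f\!\left(\frac{(j\boldz)\bmod P}{P}\right)\e^{\frac{-2\pi\ii\, j\,\boldk\cdot\boldz}{P}}=\sum_{\substack{\boldk'\in I\\ P\,\mid\,a_{\boldk,\boldk'}}}\widehat{f_{\boldk'}}.
\]
Hence \cref{equ:MultRank1Cubature} recovers $\widehat{f_{\boldk}}$ exactly as soon as the assigned modulus $\tilde P_{\nu(\boldk)}$ divides none of the $a_{\boldk,\boldk'}$, $\boldk'\in I\setminus\{\boldk\}$. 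The whole task thus becomes: construct primes $\tilde P_0,\dots,\tilde P_{L-1}$ and a map $\nu\colon I\to[L]$ so that $\tilde P_{\nu(\boldk)}$ \emph{separates} $\boldk$ from every other frequency in $I$ in this divisibility sense.

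The heart of the argument is a halving lemma: for any subset $U\subseteq I$ of not-yet-assigned frequencies there is a single prime $P$, not too large, that separates at least half of $U$ from all of $I$. I would restrict the search to primes exceeding a threshold of the form $s^{1/3}$; then, because $|a_{\boldk,\boldk'}|\le dN_IM$, at most $3\log_2(dN_IM)/\log_2 s$ such primes can divide a fixed $a_{\boldk,\boldk'}$. Summing over the at most $|U|\,(s-1)$ relevant pairs, a pool $\mathcal P$ of roughly $6\,s\log_2(dN_IM)/\log_2 s$ such primes accounts for at most $|U|\,(s-1)\cdot 3\log_2(dN_IM)/\log_2 s$ ``bad incidences'' in total, so by averaging some $P^\star\in\mathcal P$ leaves at most $|U|/2$ elements of $U$ unseparated. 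I expect this step to be the main obstacle: one must tune the threshold exponent and the pool size simultaneously so that the factor $\tfrac12$ drops out with clean constants, and then bound the largest element of $\mathcal P$ by a Chebyshev/prime-number-theorem estimate on $\pi(x)$ — it is that estimate which produces the $\ln\!\bigl(3\tfrac{s}{\log_2 s}\log_2(dN_IM)\bigr)$ factor in the stated bound.

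Granting the halving lemma, I would iterate: set $U_0=I$ and, at round $\ell$, take $\tilde P_\ell=P^\star$ for $U_\ell$, define $\nu(\boldk)=\ell$ for the frequencies of $U_\ell$ that $\tilde P_\ell$ separates, and let $U_{\ell+1}$ be the remainder, so $|U_{\ell+1}|\le|U_\ell|/2$. After $L=\lceil\log_2 s\rceil$ rounds $U_L=\emptyset$, hence $\nu$ is defined on all of $I$ and exactness of \cref{equ:MultRank1Cubature} follows from the first paragraph; the case $s=1$ is handled by a single lattice with $\tilde P_0=2$, giving the bound $2$. Since each $\tilde P_\ell$ is at most the largest prime in a pool of size $\approx 6\,s\log_2(dN_IM)/\log_2 s$, hence at most about $\tfrac{6\,s\log_2(dN_IM)}{\log_2 s}\ln\!\bigl(\tfrac{3\,s\log_2(dN_IM)}{\log_2 s}\bigr)$, summing the $L\le\log_2 s$ of them yields $\sum_{\ell}\tilde P_\ell\le 6\,s\log_2(dN_IM)\,\ln\!\bigl(3\tfrac{s}{\log_2 s}\log_2(dN_IM)\bigr)$, as claimed.

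For the complexity statements I would simply cost out the greedy loop and the final transforms. Forming the integers $\boldk\cdot\boldz$ (each of bit length $\OO{\log(dN_IM)}$) for all $\boldk\in I$ is a one-time $\OO{sd}$ step; each of the $L=\OO{\log_2 s}$ rounds then inspects $\OO{s\log_2(dN_IM)/\log_2 s}$ candidate primes, and for each it reduces those $s$ integers modulo the candidate and detects repeats by sorting, in $\OO{s\log s}$ time, so the loop costs $\OO{s^2\log s\,\log(dN_IM)}$, while sieving the prime pool contributes the remaining $\OO{s\log(dN_IM)\log\log(dN_IM)}$ term. Reconstruction runs one one-dimensional FFT of length $\tilde P_\ell$ for each $\ell$ and then reads $\widehat{f_{\boldk}}$ off position $\boldk\cdot\boldz\bmod\tilde P_{\nu(\boldk)}$ using the stored images of the maps $m_{\boldz,\tilde P_\ell}$; bounding $\sum_\ell \tilde P_\ell\log\tilde P_\ell$ and the $\OO{d}$-per-frequency index evaluations via $L=\OO{\log_2 s}$ and $\tilde P_\ell=\OO{\tfrac{s}{\log_2 s}\log(dN_IM)}$ (up to polylogarithmic factors) then yields the stated reconstruction bound.
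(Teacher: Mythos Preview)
Your proposal is correct and follows essentially the same route as the paper: reduce to a one-dimensional separation problem via the inner products $\boldk\cdot\boldz$, prove a halving lemma by averaging/pigeonhole over a finite pool of candidate primes (this is \cref{lem:proof_Ps_1d} in the paper), iterate $\lfloor\log_2 s\rfloor+1$ times, and bound the largest candidate prime via explicit Rosser--Schoenfeld estimates (\cref{lem:estimate_PqKm1}), with the complexity bounds falling out exactly as you describe. The one parameter you left open is the threshold, and there the paper's choice differs from yours: it starts the candidate pool at $P_q$, the smallest prime $\ge s$, rather than at $s^{1/3}$; with your threshold the pool size triples, so the leading constant would land nearer $12$ than $6$, whereas the choice $P_q\ge s$ is precisely the tuning that yields $C_1<2.832$ and $C_2\le 2.3$, which after summing over $L\le\log_2 s+1$ lattices and invoking $\tilde M\le dN_IM$ give the $6$ and the $3$ in the stated bound.
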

\begin{proof}
	The bounds on the total number of samples from the rank\mbox{-}1 lattices follow from \cref{thm:not_more_than_half_of_frequencies_collide}.
	The computational complexity bound for lattice construction follows from \cref{sub:analysis_of_lattice_construction}, and the bound for reconstructing Fourier coefficients follows from \cite[Algorithm~1]{kaemmerer2019multiple}.
\end{proof}

\subsubsection{An Application to SFTs}  

One additional consequence of our deterministic multiple rank\mbox{-}1 lattice construction approach, beyond reducing the number of function evaluations necessary in order to collectively achieve exact cubature rules~\eqref{equ:MultRank1Cubature} for all trigonometric polynomials of type \eqref{equ:trigPolyf} in a deterministic way, is the ability to trivially parallelize SFT methods for the sparse Fourier approximation of periodic functions of many variables in a modularized fashion.  After deterministically transforming an existing rank\mbox{-}1 lattice into multiple rank\mbox{-}1 lattices satisfying \eqref{equ:MultRank1Cubature} for a large index set $I$ of interest, the SFT of one's choice may then be applied on each of the resulting smaller rank\mbox{-}1 lattices in parallel in order to more rapidly and stably collectively discover a superset of the true Fourier support of any function $f$ with $\textrm{supp}\left(\widehat{f}\right) \subset I$.  Fast secondary Fourier coefficient estimation methods (see, e.g., section 4 of \cite{gilbert2014recent}) can then be used to estimate the Fourier coefficients of the discovered frequencies in order to eliminate false positives. As a result of such approaches, we anticipate that the development of novel parallel SFT methods for the approximation of functions of many variables which are both faster and more numerically stable than SFT methods based on a single rank\mbox{-}1 lattice will result from the new lattice constructions presented herein.

\subsubsection{An Application to the Recovery of More General Functions}  

As previously mentioned, we construct a cubature rule \eqref{equ:MultRank1Cubature} that exactly reconstructs all Fourier coefficients
of multivariate trigonometric polynomials with frequencies in a specific frequency set~$I$ which is assumed to be given.
Of course, one can apply these cubature rules in order to compute approximations of the Fourier coefficients of more general periodic functions. The resulting trigonometric polynomial can be used as an approximant. For specific approximation settings, it is clear that
the worst case error of this approximation is almost as good as the approximation one achieves when approximating the Fourier coefficients using the lattice rule that uses all samples of the reconstructing single rank\mbox{-}1 lattice from which we start the construction of our cubature rules, cf.~\cite{KaVo19} for details. 
From that point of view, the strategy we present in this paper even yields a general approach for significantly reducing the number of sampling values used while only slightly increasing approximation errors.
We refer to Remark~\ref{rem:ApproximationResult} for more details and to the numerical example in section~\ref{sec:numerics:symhceven:approx} that yields
Figure~\ref{fig:G_3:rel_sampl_err_L2} illustrating this assertion.

\section{The Proof of Theorem~\ref{thm:summary_theorem}}
\label{sec:main_result}

We denote the $q$th prime number by $P_q$, $q\in\N$. For technical reasons, we define $P_0:=1$.

\begin{lemma}\label{lem:proof_Ps_1d}
Let $J:=\{k_1,\ldots,k_s\}\subset[\tilde{M}]$, $|J|=s$, for $s,\tilde{M}\in\N$ with $\tilde{M}\ge s\ge1$.
Moreover, we determine $q\in\N$ such that $P_{q-1}<s\le P_q$, and $K:=\max\left(1,2(s-1)\ceil{-1+\log_{P_q}\tilde{M}}\right)$.
Then, there exist prime numbers $\tilde{P}_0, \ldots, \tilde{P}_{L-1}\in \mathcal{P}_s:=\{P_\ell\colon \ell=q,\dots,q+K-1\}$, $L \le \log_2{s}+1$
such that
\begin{equation*}
	J=\bigcup_{\ell=0}^{L-1}\{k\in J\colon k\not\equiv h\imod{\tilde{P}_\ell} \text{ for all } h\in J\setminus\{k\}\}
\end{equation*}
holds.
\end{lemma}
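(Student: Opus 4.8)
The plan is to build the primes $\tilde P_0,\ldots,\tilde P_{L-1}$ greedily via a halving argument resting on two elementary observations: a divisor bound limiting how many primes of $\mathcal{P}_s$ can force a fixed element of $J$ to collide with some other element of $J$, and a double-counting/pigeonhole step which, at each stage, produces a single prime isolating at least half of the elements of $J$ not yet isolated. I would first dispose of two degenerate situations. If $s=1$ the condition defining the union is vacuous, so $L=1$ with any prime of $\mathcal{P}_s$ works; assume $s\ge2$ from now on. If in addition $\tilde M\le P_q$, then $\ceil{-1+\log_{P_q}\tilde M}\le0$ forces $K=1$, i.e., $\mathcal{P}_s=\{P_q\}$; since $J\subseteq[\tilde M]\subseteq[P_q]$ consists of $s$ residues that are pairwise distinct modulo $P_q$, every $k\in J$ satisfies $k\not\equiv h\imod{P_q}$ for all $h\in J\setminus\{k\}$, so the claim holds with $L=1$ and $\tilde P_0:=P_q$. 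Thus I may assume also $\tilde M>P_q$, so that $t:=\ceil{-1+\log_{P_q}\tilde M}\ge1$ and $K=2(s-1)t$.

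For the divisor bound, fix distinct $k,h\in J$; then $1\le|k-h|\le\tilde M-1<\tilde M$, and if $p_1<\cdots<p_r$ are the distinct primes $\ge P_q$ dividing $k-h$, their product satisfies $P_q^r\le p_1\cdots p_r\le|k-h|<\tilde M$, hence $r<\log_{P_q}\tilde M$ and therefore $r\le\ceil{\log_{P_q}\tilde M}-1=t$. Since every prime of $\mathcal{P}_s$ is $\ge P_q$, it follows that for a fixed $k\in J$ the number of $P\in\mathcal{P}_s$ admitting some $h\in J\setminus\{k\}$ with $k\equiv h\imod{P}$ is at most $(s-1)t=K/2$; equivalently, $k$ is isolated modulo at least $K-(s-1)t\ge K/2$ of the $K=|\mathcal{P}_s|$ primes.

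Then I would run the recursion with $J_0:=J$. As long as $J_i\ne\emptyset$, I count the pairs $(k,P)$ with $k\in J_i$, $P\in\mathcal{P}_s$, and $k\not\equiv h\imod{P}$ for all $h\in J\setminus\{k\}$: by the previous paragraph there are at least $|J_i|\,K/2$ of them, so some $\tilde P_i\in\mathcal{P}_s$ forms such a pair with at least $|J_i|/2$ of the $k\in J_i$. Setting $G_i:=\{k\in J\colon k\not\equiv h\imod{\tilde P_i}\text{ for all }h\in J\setminus\{k\}\}$ and $J_{i+1}:=J_i\setminus G_i$ gives $|J_{i+1}|\le\lfloor|J_i|/2\rfloor$, hence $|J_i|\le\lfloor s/2^i\rfloor$ and $J_L=\emptyset$ for $L:=\lfloor\log_2 s\rfloor+1\le\log_2 s+1$. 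Since $J=J_0\supseteq\cdots\supseteq J_L=\emptyset$ and $J_i\setminus J_{i+1}=J_i\cap G_i\subseteq G_i$, we obtain $J=\bigcup_{i=0}^{L-1}(J_i\setminus J_{i+1})\subseteq\bigcup_{i=0}^{L-1}G_i\subseteq J$, which is the asserted identity. Finally the $\tilde P_i$ lie in $\mathcal{P}_s$ by construction and are pairwise distinct: each $k\in J_i$ collides with some element of $J$ modulo each of $\tilde P_0,\ldots,\tilde P_{i-1}$, whereas at least one $k\in J_i$ is isolated modulo $\tilde P_i$, so $\tilde P_i\notin\{\tilde P_0,\ldots,\tilde P_{i-1}\}$; and $L\le\log_2 s+1\le2(s-1)\le K$ for $s\ge2$, so $\mathcal{P}_s$ is never exhausted.

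The part I expect to require the most care is the bookkeeping rather than any deep step: one must keep the isolation condition anchored at the full set $J$ throughout the recursion (not at the shrinking remainders $J_i$) while still harvesting a factor-two shrinkage at each stage, and one must check that the greedily chosen primes come out distinct so that $L$ remains within the supply of $K$ primes available in $\mathcal{P}_s$. The divisor estimate $r\le t$ and the inequality $\log_2 s+1\le2(s-1)$ for $s\ge2$ are routine.
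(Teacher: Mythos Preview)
Your proof is correct and follows essentially the same approach as the paper's own proof: a divisor bound showing each $k\in J$ is isolated modulo at least $K/2$ of the primes in $\mathcal{P}_s$, followed by the double-counting/pigeonhole step and the halving recursion on the set of not-yet-isolated elements. The additional checks you include (explicit distinctness of the $\tilde P_i$ and $L\le K$) are not strictly needed for the lemma as stated but match what the paper's proof also implicitly establishes.
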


\begin{proof}
We assume $s\ge 2$ and $\tilde{M}>P_q$, otherwise the statement is trivial.

Let $\mathcal{P}_s=\{P_\ell\,\colon\ell=q,\ldots,q+K-1\}$ be the set of the $K$ smallest prime numbers not smaller than $P_q$ and
$
Y_{i,j}:=\{P\in \mathcal{P}_s\,\colon k_i\equiv k_j\imod{P}\}
$
a subset 
which collects all primes $P$ in $\mathcal{P}_s$ where the frequencies $k_i\in J$ and $k_j\in J$
collide modulo $P$.
Since $ |k_i - k_j| $ is divisible by each prime $ P $ in $ Y_{ i, j} $, the Chinese Remainder Theorem implies that $ \prod_{ P \in Y_{ i, j } } P $ divides $ |k_i - k_j| < \tilde M $.
Therefore, we observe
\begin{equation*}
P_q^{|Y_{i,j}|} \le \prod_{P\in Y_{i,j}}P < \tilde{M}
\end{equation*}
for all
$i \neq j\in\{1,\ldots,s\}=:S_0$, i.e., $ k_i \neq k_j $, and this implies $|Y_{i,j}| \le \ceil{-1 + \log_{P_q} \tilde{M}}$.

Moreover, we collect all primes for which $k_i$ collides with any other $k_j$ in the sets
\begin{equation*}
Y_i:=\{P\in \mathcal{P}_s\,\colon k_i\equiv k_j\imod{P} \text{ for at least  one } k_j\in J\setminus\{k_i\}\}=\bigcup_{k_j\in J\setminus\{k_i\}}Y_{i,j}.
\end{equation*}
The cardinality of each $Y_i$ is bounded by
\begin{equation*}|Y_i| \le \sum_{k_j\in J\setminus\{k_i\}}|Y_{i,j}| \le (s-1)\ceil{-1+\log_{P_q}\tilde{M}}.\end{equation*}
Accordingly, we count
\begin{equation*}|\mathcal{P}_s\setminus Y_i|= |\mathcal{P}_s|-|Y_i|\ge K- (s-1)\ceil{-1+\log_{P_q}\tilde{M}} \ge|\mathcal{P}_s|/2.\end{equation*}

We define the indicator variables
\begin{equation*}
Z_{i,\ell}:=\begin{cases}
1 & P_\ell \in \mathcal{P}_s\setminus Y_i,\\
0 & P_\ell\in Y_i,
\end{cases}
\end{equation*}
for all $k_i\in J$ and $P_\ell\in \mathcal{P}_s$. Summing up these indicator variables $Z_{i,\ell}$ and using the estimates from above yields
\begin{equation}
\sum_{i\in S_0}\sum_{\ell=q}^{q+K-1} Z_{i,\ell} 
=\sum_{i\in S_0} |\mathcal{P}_s\setminus Y_i| \ge |S_0||\mathcal{P}_s|/2=s|\mathcal{P}_s|/2.
\label{eq:sum_Z}
\end{equation}

We will now show that $\sum_{i\in S_0} Z_{i,\ell} \ge s/2$  holds for at least one $P_\ell\in \mathcal{P}_s$ by contradiction.
To this end, suppose that
$
\sum_{i\in S_0} Z_{i,\ell} < s/2
$
for all $P_\ell\in \mathcal{P}_s$. Accordingly, we estimate
\begin{equation*}
\sum_{\ell=q}^{q+K-1}\sum_{i\in S_0} Z_{i,\ell}< |S_0||\mathcal{P}_s|/2=s|\mathcal{P}_s|/2
\end{equation*}
which is in contradiction to \cref{eq:sum_Z}.
Thus, there exists at least one prime $P_{\ell_0}\in \mathcal{P}_s$ such that
\begin{equation*}\sum_{i\in S_0} Z_{i,\ell_0}= |\underbrace{\{k_i\in J\,\colon k_i\not\equiv k_j\imod{P_{\ell_0}} \text{ for all } k_j\in J\setminus\{k_i\}\}}_{=:J_1}| \ge s/2.\end{equation*}
We set $\tilde{P}_0:=P_{\ell_0}$, and then apply the strategy iteratively.

For $r\in\N$, $r\ge 1$, we define $S_r:=\{i\in S_{r-1}\colon\exists k_j\in J\setminus\{k_i\}\text{ with } k_i\equiv k_j\imod{\tilde{P}_{r-1}}\}$ and obtain $s_r:=|S_r|\le 2^{-r} s$. Obviously, we have
\begin{equation}
J'_r:=\{k_i\colon i\in S_r\}=J\setminus\bigcup_{t=1}^{r}J_r,\label{eq:def_Jprime_r}
\end{equation}
which are the frequencies that collide modulo each of $\tilde{P}_0,\ldots,\tilde{P}_{r-1}$ to some other frequency in~$J$. We reconsider the variables defined above, but now we restrict the indices to $i\in S_r$.
For instance, we observe $\{\tilde{P}_0,\ldots,\tilde{P}_{r-1}\}\subset Y_i$ for all $i\in S_r$.
We estimate
\begin{align*}
\sum_{i\in S_r}\sum_{\ell=q}^{q+K-1}Z_{i,\ell}=\sum_{i\in S_r}|\mathcal{P}_s\setminus Y_i|\ge s_r|\mathcal{P}_s|/2.
\end{align*}
Using the same contradiction as above, we observe that for at least one $P_{\ell_r}\in\mathcal{P}_s\setminus\{\tilde{P}_0,\ldots,\tilde{P}_{r-1}\}$ we have
\begin{equation*}
\sum_{i\in S_r}Z_{i,\ell_r}=|\underbrace{\{k_i\in J'_r\colon k_i\not\equiv k_j\imod{P_{\ell_r}} \text{ for all } k_j\in J\setminus\{k_i\}\}}_{=:J_{r+1}}|\ge s_r/2.
\end{equation*}

We now set $\tilde{P}_r:=P_{\ell_r}$ and increase $r$ up to the point where $0=|S_{r+1}|=s_{r+1}$ holds.
In order to estimate the largest possible step number $ r_{ \text{max} } \geq r $, we require that $ s_{ r_{ \text{max} } + 1 } \le 2^{ -(r_{ \text{max} } + 1) }s < 1 $.
This is satisfied in particular when $ r_{ \text{max} } = \floor{\log_2(s)} $, and thus we bound the total number of primes as $ L \leq r_{ \text{max} } + 1 \leq \log_2(s) + 1 $.
\end{proof}

\begin{remark}
In the proof of \cref{lem:proof_Ps_1d} we determined that there exist primes in the candidate set~$\mathcal{P}_s$ fulfilling the assertion. This set contains the first $K:=\max \left(1, -1 + 2(s-1)\ceil{\log_{P_q}\tilde{M}} \right)$ prime numbers not smaller than $P_q$, $P_{q-1}<s\le P_q$, which only depends on $s$. However, from a theoretical point of view, any prime  number $P$ larger than $\ceil{s/2}$ may fulfill $|J_1|\ge s/2$. Thus, one also could start the set of prime candidates at that point, which would result in a slightly increased cardinality of the candidate set, due to the fact that $K$ depends on the logarithm to the base of the smallest prime in the candidate set. In spite of that increased cardinality, the maximal prime number in the candidate set $P_{q+K-1}$, which is estimated in the next lemma, may be decreased.
Analyzing this approach leads to similar statements as in the previous and the following lemmas with slightly changed constants. In more detail, both constants $C_1$ and $C_2$ can be bounded less than~$3$. However, the proof requires more effort and we could not bound the resulting constants lower than those stated in \cref{lem:estimate_PqKm1}.
\end{remark}

\begin{lemma}\label{lem:estimate_PqKm1}
Assume $s,\tilde{M}\in\N$, $s\le\tilde{M}$, $P_q$ is the smallest prime not smaller than~$s$,
and let $K:=\max\left(1,2(s-1)\ceil{-1 + \log_{P_q}\tilde{M}}\right)$.
Then, we estimate 
\begin{equation*}
P_{q+K-1}\le \begin{cases}
2 & \text{for } s=1,\\
C_1 s \, (\log_s\tilde{M}) \, \ln (C_2s\log_s\tilde{M}) & \text{for } s\ge2,\\
\end{cases}
\end{equation*}
with absolute constants $ C_1 < 2.3 \, (1 + \e^{ -3/2 }) \le 2.832 $ and $ C_2 \le 2.3 $.
\end{lemma}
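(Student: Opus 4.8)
The plan is to estimate $P_{q+K-1}$ by first controlling the index $q+K-1$ and then applying an explicit upper bound on the $n$th prime. For $s=1$ we have $P_0 = 1$ and $K = \max(1, 0) = 1$, so $P_{q+K-1} = P_q = P_1 = 2$, which gives the first case. For $s \geq 2$, first I would bound the index: since $P_q$ is the smallest prime not smaller than $s$, Bertrand's postulate gives $P_q < 2s$, so by a standard lower bound on the prime counting function (e.g. $\pi(x) \geq x/\ln x$ for $x$ large enough, or an explicit Rosser–Schoenfeld-type estimate), we get $q = \pi(P_q) \leq \pi(2s) = \OO{s/\ln s}$; more carefully, one wants an explicit constant here, so I would use $q \leq 2s/\ln(2s) \cdot (1+o(1))$ or a clean explicit bound valid for all $s \geq 2$. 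Then $K \leq 2(s-1)\ceil{-1 + \log_{P_q}\tilde M} \leq 2(s-1)\log_{P_q}\tilde M \leq 2s \log_s \tilde M$ (using $P_q \geq s$ so $\log_{P_q}\tilde M \leq \log_s \tilde M$, and absorbing the ceiling and the $-1$), so that $q + K - 1 \leq C s \log_s \tilde M$ for an explicit absolute constant $C$.

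Next I would apply an explicit upper bound for the $n$th prime, namely $P_n \leq n(\ln n + \ln\ln n)$ for $n$ sufficiently large (Rosser's theorem / the Dusart refinements), or a version valid for all $n \geq 6$ with a slightly worse constant, so that $P_n = \OO{n \ln n}$. Applying this with $n = q + K - 1 \leq C s \log_s \tilde M =: C\,m$ where $m := s\log_s\tilde M$, we get
\begin{equation*}
P_{q+K-1} \leq P_{\lceil Cm\rceil} \leq C' m \ln(C'' m)
\end{equation*}
for explicit constants. Rewriting $m = s\log_s\tilde M$ and noting $\ln(C'' s \log_s\tilde M) \leq \ln(C_2 s \log_s \tilde M)$ after tuning constants, this is exactly the claimed form $C_1 s (\log_s \tilde M)\ln(C_2 s \log_s\tilde M)$.

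The main obstacle will be tracking the explicit constants carefully enough to land on $C_1 < 2.3(1+\e^{-3/2})$ and $C_2 \leq 2.3$: the ceiling function $\ceil{-1+\log_{P_q}\tilde M}$, the gap between $s$ and $P_q$ (Bertrand gives a factor up to $2$, but one can do better for large $s$), the slack in $\pi(x) \geq x/\ln x$, and the slack in $P_n \leq n(\ln n + \ln\ln n)$ all compound multiplicatively. I expect the proof needs a case split — a large-$s$ regime where sharp asymptotic prime estimates apply with small constants, and a finite range of small $s$ handled by direct verification — with the $\e^{-3/2}$ in $C_1$ arising from a worst-case bound on the $\ln\ln$ correction term relative to the dominant $\ln$ term over the relevant range of the argument. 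The inequality $s \leq \tilde M$ is used to guarantee $\log_s\tilde M \geq 1$, keeping all the logarithmic factors at least $1$ so the estimates don't degenerate.
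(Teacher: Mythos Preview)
Your proposal is correct and essentially matches the paper's proof: bound $q+K-1 \le 2.3\, s\log_s\tilde M$ using the Rosser--Schoenfeld estimate $\pi(x) < 1.25506\,x/\ln x$ together with $\lceil -1+\log_{P_q}\tilde M\rceil \le \log_s\tilde M$, then apply $P_n < n(\ln n + \ln\ln n - 1/2)$ for $n\ge 20$ while checking a finite range of small $(s,\tilde M)$ numerically. Your intuition about $\e^{-3/2}$ is exactly right --- it is the optimal constant in the elementary inequality $\ln\ln x - \tfrac12 \le \e^{-3/2}\ln x$ (equality at $\ln x = \e^{3/2}$), which the paper uses to absorb the $\ln\ln$ correction into the leading $\ln$ term.
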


\begin{proof}
For $s=1$, we observe $P_{q+K-1}=P_q=2$.\newline
When $s\ge2$ and $P_q\ge\tilde{M}$ we have $K=1$ and $P_q< 2s$ as a result of Bertrand's postulate.

We then consider $s\ge 2$ and $P_q< \tilde{M}$ which yields
\begin{equation*}q+K-1=q-1+2(s-1)\ceil{-1 + \log_{P_q}\tilde{M}}\le q-1+2(s-1)\log_{P_q}\tilde{M}.\end{equation*}

We distinguish two cases, where the final constants from the lemma are determined by the second case.
In the first, we restrict to the finite range where $2\le s \le 8$ with $P_q < \tilde{M} < P_q^{ \left \lceil 10 / (s - 1) \right \rceil  }$, and numerically check that the upper bound
\begin{equation*}
	P_{ q + K - 1 } < 2.831 \, s \, \log_s \tilde M \ln \left( 2.3 \, s \, \log_s \tilde M \right)
\end{equation*}
is satisfied.
In the second case, where $2 \leq s \leq 8$ with $ \tilde M \geq P_q^{ \left \lceil 10/(s - 1) \right \rceil  } $ or $ s \geq 9 $, we have $ q - 1 + K \geq 20 $.
We then estimate this quantity from above as
\begin{align*}
	q + K - 1  
		&\leq q - 1 + 2(s - 1) \log_s \tilde M = \left( \frac{q - 1}{s \log_s \tilde M} + 2 \frac{s - 1}{s}  \right)s \log_s \tilde M \\
		&\leq \left( \frac{q - 1}{s} + 2 \frac{s - 1}{s}  \right) s \log_s \tilde M \leq 2.3 \,s \,\log_s \tilde M
\end{align*}
where one achieves the last estimate by computing $ \frac{q - 1}{s} + 2 \frac{s - 1}{s} $ for $ 2 \leq s < 66 $ and for $ s \geq 66 $, one obtains
\begin{equation*}\frac{q-1}{s}+2\frac{s-1}{s}\overset{\text{\cite[Eq.\,(3.6)]{RoSchoe62}}}{\le} \frac{1.25506}{\ln s}+2\le \frac{1.25506}{\ln 66}+2<2.3\,.\end{equation*}
By the estimate
\begin{equation*}
	e^{ -1/2 }x \ln(x) \leq x^{ 1 + \e^{ -3/2 } } \implies \ln(e^{ -1/2 } x \ln x) = \ln(x) + \ln \ln(x) - \frac{1}{2} \leq (1 + \e^{ -3/2 }) \ln x
\end{equation*}
for $ x > 1 $, an application of \cite[Eq.\,(3.11)]{RoSchoe62} gives
\begin{align*}
P_{q+K-1}&< (q+K-1) \, \big(\ln(q+K-1)+\ln\ln(q+K-1)-1/2\big)\\
&\le (1+\e^{-3/2}) \, (q+K-1) \, \ln(q+K-1)\\
&\le (1 + \e^{ -3/2 }) \, 2.3 \, s \, (\log_s\tilde{M}) \, \ln(2.3 \, s\log_s\tilde{M}),
\end{align*}
as desired.
\end{proof}

\Cref{lem:proof_Ps_1d} ensures the existence of a set of primes $ \tilde{P}_0,\ldots,\tilde{P}_{L-1} $ such that each single element of a given set of integers will not collide modulo at least one $\tilde{P}_\ell$ with any other of these integers. We can now use these primes to convert the large reconstructing single rank\mbox{-}1 lattice $ \Lambda(z, M, I) $ for some frequency set $I$ into smaller rank\mbox{-}1 lattices which, based on their ability to avoid collisions in the frequency domain, will provide a sampling set to exactly reconstruct the Fourier coefficients of all multivariate trigonometric polynomials in $\Pi_I$.

\begin{theorem}
\label{thm:not_more_than_half_of_frequencies_collide}
Let $I\subset\Z^d$, $|I|=s\ge 2$, and a generating vector $\boldz\in[M]^d$ of a reconstructing single rank\mbox{-}1 lattice $\Lambda(\boldz,M, I)$ be given.
We determine 
$\tilde{M}:=\max\{\boldk\cdot\boldz\colon\boldk\in I\}-\min\{\boldk\cdot\boldz\colon\boldk\in I\}+1$.
Then there exists a set of prime numbers $\tilde{P}_0,\ldots,\tilde{P}_{L-1}$, $L\le \log_2{s}+1$, such that
\begin{equation}
I=\bigcup_{\ell=0}^{L-1}\{\boldk\in I\colon \boldk\cdot\boldz\not\equiv \boldh\cdot\boldz\imod{\tilde{P}_\ell} \text{ for all } \boldh\in I\setminus\{\boldk\}\},\label{eq:union_to_I}
\end{equation}
which means that the Fourier coefficients of a multivariate trigonometric polynomial in $\Pi_I$ can be uniquely reconstructed from the sampling values of the rank\mbox{-}1 lattices $\Lambda(\boldz,\tilde{P}_0)$, \ldots, $\Lambda(\boldz,\tilde{P}_{L-1})$.
The number of sampling values used can be bounded by
\begin{equation}
\sum_{\ell=0}^{L-1}\tilde{P}_{\ell}\le 2 \, C_1 s \, (\log_2\tilde{M}) \, \ln (C_2 \, s\log_s\tilde{M}),\label{eq:details_sampling_nodes}
\end{equation}
with constants $C_1$, $C_2$ from \cref{lem:estimate_PqKm1}.
\end{theorem}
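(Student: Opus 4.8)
The plan is to reduce the $d$-dimensional collision problem to the one-dimensional situation already handled by \cref{lem:proof_Ps_1d}. The key observation is that the reconstructing property of $\Lambda(\boldz,M,I)$ — injectivity of $m_{\boldz,M}$ on $I$ — forces the integers $\boldk\cdot\boldz$, $\boldk\in I$, to be pairwise distinct as elements of $\Z$: if $\boldk\cdot\boldz=\boldh\cdot\boldz$ for $\boldk\neq\boldh$ in $I$, then $\boldk\cdot\boldz\equiv\boldh\cdot\boldz\imod{M}$, contradicting injectivity. Hence, setting $c:=\min\{\boldk\cdot\boldz\colon\boldk\in I\}$, the translated set $J:=\{\boldk\cdot\boldz-c\colon\boldk\in I\}$ has cardinality $s$ and is contained in $[\tilde M]$, so $\tilde M\ge s\ge 2$ and the hypotheses of \cref{lem:proof_Ps_1d} are met. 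Since translating every dot product by the same constant $c$ leaves all pairwise differences — and therefore all congruences $\boldk\cdot\boldz\equiv\boldh\cdot\boldz\imod{P}$ — unchanged, this normalization is harmless.

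Applying \cref{lem:proof_Ps_1d} to $J$ with this $\tilde M$ yields primes $\tilde P_0,\ldots,\tilde P_{L-1}\in\mathcal P_s$ with $L\le\log_2 s+1$ that cover $J$ in the stated sense; pulling this covering back through the bijection $\boldk\leftrightarrow\boldk\cdot\boldz-c$ gives exactly \cref{eq:union_to_I}. For the reconstruction claim I would fix $\boldk\in I$, choose an index $\nu(\boldk)\in[L]$ with $\boldk\cdot\boldz\not\equiv\boldh\cdot\boldz\imod{\tilde P_{\nu(\boldk)}}$ for all $\boldh\in I\setminus\{\boldk\}$ (guaranteed by \cref{eq:union_to_I}), substitute the expansion \cref{equ:trigPolyf} into the right-hand side of \cref{equ:MultRank1Cubature} with $P:=\tilde P_{\nu(\boldk)}$, and evaluate the geometric sum $\tfrac1P\sum_{j=0}^{P-1}\e^{2\pi\ii j(\boldh-\boldk)\cdot\boldz/P}$, which equals $1$ when $(\boldh-\boldk)\cdot\boldz\equiv 0\imod{P}$ and $0$ otherwise. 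By the choice of $\nu(\boldk)$ only the term $\boldh=\boldk$ survives, so the cubature rule returns $\widehat{f_{\boldk}}$ exactly; this is the standard rank\mbox{-}1 lattice computation, now carried out on $\Lambda(\boldz,\tilde P_{\nu(\boldk)})$.

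For the node count I would use the crude estimate $\sum_{\ell=0}^{L-1}\tilde P_\ell\le L\cdot P_{q+K-1}$, valid because every $\tilde P_\ell$ lies in $\mathcal P_s$, whose largest element is $P_{q+K-1}$. The hypotheses of \cref{lem:estimate_PqKm1} ($s\le\tilde M$ and ``$P_q$ the smallest prime not smaller than $s$'', with the same $K$) are precisely those inherited from \cref{lem:proof_Ps_1d}, so $P_{q+K-1}\le C_1\,s\,(\log_s\tilde M)\,\ln(C_2\,s\log_s\tilde M)$. Combining this with $L\le\log_2 s+1$ and the elementary identity $(\log_2 s+1)\log_s\tilde M=\log_2\tilde M\,(1+1/\log_2 s)\le 2\log_2\tilde M$, valid for $s\ge 2$, yields \cref{eq:details_sampling_nodes}.

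The only non-mechanical point — and thus the main obstacle — is the opening observation that the reconstructing property makes $\boldk\mapsto\boldk\cdot\boldz$ injective on $I$ as a map into $\Z$, not merely into $\Z/M\Z$. This is exactly what allows the one-dimensional lemma to be invoked with the typically much smaller modulus $\tilde M$ in place of $M$; once this is in hand, the remaining work is the bijective transfer of the covering, a routine geometric-sum identity, and the logarithm bookkeeping above.
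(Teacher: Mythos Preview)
Your proposal is correct and follows essentially the same approach as the paper: translate the dot products $\boldk\cdot\boldz$ by their minimum to obtain a set $J\subset[\tilde M]$ of cardinality $s$ (the paper does this via $\boldk^*$ and the univariate polynomial $f^{1\text{d}}$, which is just a repackaging of the same shift), apply \cref{lem:proof_Ps_1d}, and bound $\sum_\ell\tilde P_\ell\le(\log_2 s+1)P_{q+K-1}$ using \cref{lem:estimate_PqKm1} together with the identity $(\log_2 s+1)\log_s\tilde M\le 2\log_2\tilde M$. Your geometric-sum verification of the cubature formula is the standard argument the paper encodes through sampling $f^{1\text{d}}$; the injectivity observation you flag as the main point is exactly what the paper invokes when asserting $|I^{1\text{d}}|=|I|$.
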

\begin{proof}
We consider a multivariate trigonometric polynomial $f\in\Pi_I$ as per~\cref{equ:trigPolyf}, $I\subset\Z^d$, $|I|=s<\infty$,
and determine $\boldk^*\in I$ such that $\boldk^*\cdot\boldz=\min\{\boldk\cdot\boldz\colon\boldk\in I\}$,
which is unique since $\Lambda(\boldz,M,I)$ is a reconstructing single rank\mbox{-}1 lattice for $I$.

For the given vector $\boldz\in\Z^d$, we define the univariate trigonometric polynomial
\begin{equation*}
f^{1\text{d}}(t):=\e^{-2\pi\ii\boldk^*\cdot\boldz\, t}f(t\boldz)
=
\sum_{\boldk\in I}\widehat{f_{\boldk}} \, \e^{2\pi\ii(\boldk\cdot\boldz-\boldk^*\cdot\boldz)\, t} =
\sum_{\boldk\in I}\widehat{f^{1\text{d}}_{l_\boldk}} \, \e^{2\pi\ii l_\boldk\, t},
\end{equation*}
which is $1$-periodic and represents the evaluation of the multivariate trigonometric polynomial~$f$
along the direction given by the vector $\boldz$ times some frequency shift factor.
The (one-dimensional) frequency set of $f^{1\text{d}}$ is then determined by $I^{1\text{d}}:=\{l_\boldk:=(\boldk-\boldk^*)\cdot\boldz\,\colon\boldk\in I\}\subset[\tilde{M}]$
and the mapping $\boldk\mapsto l_\boldk$, $\boldk\in I$, is injective, i.e.,  $|I^{1\text{d}}|=|I|=s$, since $ \Lambda(\boldz, M, I) $ is a reconstructing single rank\mbox{-}1 lattice.
Applying \cref{lem:proof_Ps_1d} with $J=I^{1\text{d}}$ and $\tilde{M}$ as above
we find a set of prime numbers $\{\tilde{P}_0, \ldots, \tilde{P}_{L-1}\}$ with $\tilde{P}_\ell\le P_{q-1+K}$ 
such that \cref{eq:union_to_I} holds.
Thus, sampling at all nodes of the union of $L$ different equidistant sampling schemes
\begin{equation*}
\bigcup_{\ell=0}^{L-1}\left\{0,\frac{1}{\tilde{P}_\ell},\ldots,\frac{\tilde{P}_\ell-1}{\tilde{P}_\ell}\right\},
\end{equation*}
which contains at most $1-L+\sum_{\ell=0}^{L-1}\tilde{P}_\ell$ sampling nodes, will allow for the unique reconstruction of
all Fourier coefficients of $f^{1\text{d}}$, i.e.,
all Fourier coefficients of the multivariate trigonometric polynomial $f$ can be uniquely reconstructed using the sampling values 
of $f$ at
\begin{equation*}
\mathcal{X}:=\bigcup_{\ell=0}^{L-1}\left\{\boldzero,\frac{1}{\tilde{P}_\ell}\boldz,\ldots,\frac{\tilde{P}_\ell-1}{\tilde{P}_\ell}\boldz\right\}
\end{equation*}
using the inversion of the injective mapping $\boldk\mapsto l_\boldk$, $\boldk\in I$.
Due to the periodicity of~$f$, we have
\begin{equation*}
\{f(\boldx)\colon\boldx\in\mathcal{X}\}=
\left\{f(\boldx)\colon\boldx\in \Lambda(\boldz,\tilde{P}_0)\cup\cdots\cup\Lambda(\boldz,\tilde{P}_{L-1})\right\}
\end{equation*}
which yields the first assertion.

Finally, we estimate
\begin{align*}
\left|\bigcup_{\ell=0}^{L-1} \Lambda(\boldz,\tilde{P}_\ell)\right|&\le 
\sum_{\ell=0}^{L-1}\tilde{P}_\ell
\le (\log_2{s}+1) P_{q-1+K}\overset{\text{Lem.~\ref{lem:estimate_PqKm1}}}{\le}
2C_1s(\log_2\tilde{M})\ln (C_2s\log_s\tilde{M}).
\end{align*}
\end{proof}

\begin{remark}
In fact, the one-dimensional frequency shift in the proof of the last theorem can be omitted. 
Then, for $ I_\ell $ the collection of frequencies which do not collide modulo $ \tilde P_\ell $ in $ I $, sampling along the rank\mbox{-}1 lattice $ \Lambda(\boldz, \tilde P_\ell) $ results in an equispaced sampling of the multivariate trigonometric polynomial $f$ along the generating vector~$\boldz$. Applying a one-dimensional DFT in order to determine the Fourier coefficients from these samples yields
\begin{align}
	\label{eq:LatticeDFT}
\widehat{f_\boldk}^{\Lambda(\boldz,\tilde P_\ell)}:=
\frac{1}{\tilde P_\ell}\sum_{j=0}^{\tilde P_\ell-1}f\left(\frac{j\boldz}{\tilde P_\ell}\right)\e^{ \frac{-2\pi\ii j \boldk\cdot\boldz}{\tilde P_\ell}}
=\begin{cases}
\widehat{f_\boldk} & \text{for }\boldk\in I_\ell,\\
\sum_{(\boldk-\boldh)\cdot\boldz\equiv 0\imod{\tilde P_\ell}}\widehat{f_\boldh}
& \text{for }\boldk\in I\setminus I_\ell.\\
\end{cases}
\end{align}
\end{remark}

\begin{remark}\label{rem:discuss_tildeM}
We consider two crucial estimates on $\tilde{M}$ in \cref{thm:not_more_than_half_of_frequencies_collide}
\begin{equation}
\tilde{M}
=1+\max_{\boldk\in I}\left\{\sum_{i=1}^dk_iz_i\right\}+\max_{\boldh\in I}\left\{\sum_{i=1}^d-h_iz_i\right\}\\
\le 
1+\sum_{i=1}^dz_i\left(\max_{\boldk\in I}k_i-\min_{\boldh\in I}h_i\right)
\le d N_I M
\label{eq:estimate1_tildeM}
\end{equation}
\begin{equation}
\tilde{M}
=1+\max_{\boldk\in I}\left\{\sum_{i=1}^dk_iz_i\right\}-\min_{\boldh\in I}\left\{\sum_{i=1}^dh_iz_i\right\}\\
\le 2\|\boldz\|_\infty\max_{\boldk\in I}\|\boldk\|_1+1\le 2M\max_{\boldk\in I}\|\boldk\|_1
\label{eq:estimate2_tildeM}
\end{equation}
where $N_I:=\max_{j=1,\ldots,d}\left(\max_{\boldk\in I}k_j-\min_{\boldh\in I} h_j\right)$ is the expansion of the frequency set $I$.

The estimate in \cref{eq:estimate1_tildeM} is a rough but universal upper bound on~$\tilde{M}$ that depends on the dimension~$d$. The inequality in~\cref{eq:estimate2_tildeM} provides a dimension independent upper bound on~$\tilde{M}$
in cases where the frequency set $I$ is contained in an $\ell_1$-ball of a specific size~$R$, i.e., $I\subset\{\boldk\in\Z^d\colon\|\boldk\|_1\le R\}$, which yields $\tilde{M}\le 2MR$.
We refer to \cref{sub:analysis_of_lattice_construction}, where we present and analyze
the computational costs and discuss the advantages of the latter estimate.
\end{remark}

\begin{remark}
	\label{rem:ApproximationResult}
	\sloppy
	By virtue of the Fourier coefficient reconstruction process~\cref{equ:MultRank1Cubature} and the reconstructing property \cref{eq:union_to_I} of the considered multiple rank\mbox{-}1 lattices, theoretical guarantees for approximation with trigonometric polynomials are immediate, see also~\cite{KaVo19}.
	For example, defining the Wiener algebra $ \mathcal{A}(\T^d) := \{ f \in L_1(\T^d) \colon \norm{ f }_{ \mathcal{A}(\T^d) } := \sum_{ \boldk \in \Z^d } |\widehat{f_\boldk}| < \infty \} $, for functions $ f \in \mathcal{A}(\T^d) \cap \mathcal{C}(\T^d) $, each of the DFTs in \cref{eq:LatticeDFT} used to approximate the Fourier coefficients $ \{\widehat{f_\boldk} \colon \boldk \in I\} $ can be shown to produce aliasing errors comparable to the truncation error (see e.g., \cite[Lemma 3.1]{KaVo19}).
	Thus, if we define the truncation $ S_I f := \sum_{ \boldk \in I } \widehat{f_\boldk} \, \e^{ 2\pi \ii \boldk \cdot \circ } $ and the approximation using samples of $ f $ on the generated multiple rank\mbox{-}1 lattices $ S_I^{ \Lambda } f := \sum_{ \boldk \in I } \widehat{f_\boldk}^{ \Lambda(\boldz, \tilde P_{ \nu(\boldk) }) } \e^{ 2\pi \ii \boldk \cdot \circ } $, we have approximation guarantees of the form
	\begin{equation*}
		\| f - S_I^\Lambda f \|_{ L_\infty(\T^d) } \leq \| f - S_I f \|_{ L_\infty(\T^d) } + \| S_I f - S_I^\Lambda f \|_{ L_\infty(\T^d) }\leq (1 + L) \| f - S_I f \|_{ \mathcal{A}(\T^d) }.
	\end{equation*}
	Correspondingly, one can show similar results with respect to the $L_2(\T^d)$ norm, for example, $\norm{ f - S_I^\Lambda f }_{ L_2(\T^d) } \leq (1 + L) \norm{ f - S_If }_{ \mathcal{A}(\T^d) }$, and also error estimates with respect to Sobolev Hilbert spaces of dominating mixed smoothness, cf.~\cite{KaVo19} for more details.
\end{remark}

As considered in \cite[subsection 4.2]{Lutz18} for randomized lattice constructions, we can take an alternative approach to \cref{thm:not_more_than_half_of_frequencies_collide} which requires fewer samples at the cost of having only theoretical reconstruction guarantees for trigonometric polynomials (i.e., the results concerning approximation discussed in \cref{rem:ApproximationResult} do not apply in a straightforward manner).
Rather than require that at each step of the lattice construction, a prime $ P $ is chosen so that a set of frequencies can be obtained which do not collide with \emph{any} other frequency in the original frequency set modulo $ P $, we instead recursively reduce the size of the set that the resulting rank\mbox{-}1 lattice has the reconstruction property over without concern for other frequencies.

\begin{theorem}\label{theorem:lattice_reduction}
Let $I\subset\Z^d$, $|I|=s\ge 1$, $d\ge 2$, $ \tilde M := \max\{\boldk \cdot \boldz \colon \boldk \in I\} - \min\{\boldk \cdot \boldz \colon \boldk \in I|\} + 1  $.
For a reconstructing single rank\mbox{-}1 lattice  $\Lambda(\boldz,M, I)$, there exist primes $\tilde P_0,\ldots,\tilde P_{L-1}$, $L\le\log_2{s} + 1$, with
\begin{equation}
\label{eq:upper_bound_on_sum_of_primes}
\sum_{\ell=0}^{L-1} \tilde P_\ell \le
\begin{cases}
2 & \text{for } s=1,\\
8\,s\,(\log_2 \tilde M)\,\ln( 2 \, \log_2 \tilde M) & \text{for } s\ge2,\\
\end{cases}
\end{equation}
such that for every $ f \in \Pi_I $, the formula
\begin{equation}
\label{equ:MultRank1CubatureReducingFrequencySet}
\begin{split}
\widehat{f_{\boldk}} &= \frac{1}{\tilde P_{ \nu(\boldk) }} \sum_{j = 0}^{\tilde P_{\nu(\boldk)} - 1} f_{\nu(\boldk)-1}\left( \frac{(j \boldz)\bmod{\tilde P_{\nu(\boldk)}}}{\tilde P_{\nu(\boldk)}} \right) \e^{\frac{-2\pi\ii j \boldk\cdot \boldz}{\tilde P_{\nu(\boldk)}}}
\\[1em]
\text{with}\qquad f_{\nu(\boldk)-1}(\boldx)&:=f(\boldx)-\sum_{\boldh\in\{\boldl\colon \nu(\boldl)<\nu(\boldk)\}}\hat{f}_\boldh \, \e^{2\pi\ii \boldh\cdot\boldx}
\end{split}
\end{equation}
holds where $ \nu: I \rightarrow [L]$ maps frequencies to the lattice used to reconstruct the corresponding Fourier coefficient, i.e., we can uniquely reconstruct each multivariate trigonometric polynomial with frequencies in $ I $ using samples along the rank\mbox{-}1 lattices $\Lambda(\boldz,\tilde P_0), \ldots, \Lambda(\boldz,\tilde P_{L-1})$.
\end{theorem}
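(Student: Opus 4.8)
The plan is to adapt the recursive peeling argument from the proof of \cref{lem:proof_Ps_1d}, with one change responsible for the sharper bound: at each step I only insist on avoiding collisions against the frequencies that have \emph{not yet} been assigned to a lattice, and I start the prime-candidate set at a prime comparable to the current set size rather than at~$2$. Set $I_0:=I$, $s_0:=s$, and suppose that after $\ell$ steps we have chosen pairwise disjoint subsets $J_0,\dots,J_{\ell-1}\subset I$, primes $\tilde P_0,\dots,\tilde P_{\ell-1}$, and are left with $I_\ell:=I\setminus\bigcup_{t<\ell}J_t$, $s_\ell:=|I_\ell|$. Passing through $\boldz$ exactly as in the proof of \cref{thm:not_more_than_half_of_frequencies_collide}, the differences $(\boldk-\boldh)\cdot\boldz$ for $\boldk\ne\boldh\in I_\ell$ are nonzero integers of modulus at most $\tilde M-1$ (here one uses that $\Lambda(\boldz,M,I)$ is reconstructing, which also gives $\tilde M\ge s$).

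At step $\ell$ I would let $P^{(\ell)}$ be the smallest prime at least $2s_\ell$ and take $\mathcal P^{(\ell)}$ to consist of the $K_\ell:=\max\bigl(1,\,2(s_\ell-1)\lceil-1+\log_{P^{(\ell)}}\tilde M\rceil\bigr)$ smallest primes not below $P^{(\ell)}$. For any pair $\boldk\ne\boldh\in I_\ell$, the Chinese Remainder Theorem together with $|(\boldk-\boldh)\cdot\boldz|<\tilde M$ shows that at most $\lceil-1+\log_{P^{(\ell)}}\tilde M\rceil$ primes of $\mathcal P^{(\ell)}$ divide $(\boldk-\boldh)\cdot\boldz$. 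Summing this over $\boldh\in I_\ell\setminus\{\boldk\}$, then over $\boldk\in I_\ell$, and averaging over $\mathcal P^{(\ell)}$ — precisely the counting in the proof of \cref{lem:proof_Ps_1d}, but with $I_\ell$ in place of the full set — produces a prime $\tilde P_\ell\in\mathcal P^{(\ell)}$ such that $J_\ell:=\{\boldk\in I_\ell\colon\boldk\cdot\boldz\not\equiv\boldh\cdot\boldz\imod{\tilde P_\ell}\text{ for all }\boldh\in I_\ell\setminus\{\boldk\}\}$ has $|J_\ell|\ge s_\ell/2$. I then set $I_{\ell+1}:=I_\ell\setminus J_\ell$, so that $s_{\ell+1}\le s_\ell/2$, and iterate until $I_L=\emptyset$; since $s_\ell\le 2^{-\ell}s$, this occurs with $L\le\lfloor\log_2 s\rfloor+1\le\log_2 s+1$. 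Finally define $\nu(\boldk):=\ell$ for $\boldk\in J_\ell$.

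For correctness of \cref{equ:MultRank1CubatureReducingFrequencySet}, observe that $f_{\ell-1}$ has Fourier support contained in $I_\ell=\{\boldk\in I\colon\nu(\boldk)\ge\ell\}$ by construction. Hence \cref{eq:LatticeDFT}, applied to $f_{\ell-1}$ with $I_\ell$ in the role of $I$, gives for every $\boldk\in J_\ell$ that the right-hand side of \cref{equ:MultRank1CubatureReducingFrequencySet} equals $\sum_{\boldh\in I_\ell,\ (\boldk-\boldh)\cdot\boldz\equiv 0\imod{\tilde P_\ell}}\widehat{f_{\boldh}}=\widehat{f_{\boldk}}$, the last equality because $\boldk$ does not collide in $I_\ell$ modulo $\tilde P_\ell$. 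An induction on $\ell$ (with $f_{-1}=f$) shows that the coefficients $\{\widehat{f_{\boldh}}\colon\nu(\boldh)<\ell\}$ subtracted off to form $f_{\ell-1}$ have all been recovered in rounds $0,\dots,\ell-1$, so the scheme is well defined and reconstructs every $\widehat{f_{\boldk}}$, $\boldk\in I$, from samples of $f$ along $\Lambda(\boldz,\tilde P_0),\dots,\Lambda(\boldz,\tilde P_{L-1})$.

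It remains to bound $\sum_{\ell=0}^{L-1}\tilde P_\ell\le\sum_{\ell=0}^{L-1}\max\mathcal P^{(\ell)}$. The case $s=1$ gives one step with $\tilde P_0=2$, hence the bound $2$. For $s\ge2$, I would estimate $\max\mathcal P^{(\ell)}$ — the $(\pi(P^{(\ell)})+K_\ell)$-th prime — via the explicit prime estimates of \cite{RoSchoe62}, obtaining $\max\mathcal P^{(\ell)}\le c_1\,s_\ell\,(\log_2\tilde M)\,\ln(2\log_2\tilde M)$ for an absolute constant $c_1$, and then sum using $\sum_\ell s_\ell\le\sum_{\ell\ge0}2^{-\ell}s=2s$. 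The delicate part — and the main obstacle — is tracking the constants so that $c_1$ together with the factor $2$ from $\sum_\ell s_\ell\le 2s$ actually yields the stated constant $8$; as in the proof of \cref{lem:estimate_PqKm1}, this requires carefully handling the later steps (where $s_\ell$ is a small constant so that the $K_\ell\approx\log_{P^{(\ell)}}\tilde M$ candidate primes, although beginning near a constant, run up to roughly $(\log_2\tilde M)\ln\log_2\tilde M$ and thereby produce the $\ln(2\log_2\tilde M)$ factor) together with a few small finite ranges of $(s_\ell,\tilde M)$ by direct inspection. Carrying this through yields $\sum_{\ell=0}^{L-1}\tilde P_\ell\le 8\,s\,(\log_2\tilde M)\,\ln(2\log_2\tilde M)$, as claimed.
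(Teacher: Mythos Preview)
Your proposal is correct and follows essentially the same recursive peeling strategy as the paper's proof: at step~$\ell$ apply the Chinese-Remainder/pigeonhole argument of \cref{lem:proof_Ps_1d} to the \emph{remaining} set~$I_\ell$ (rather than all of~$I$), extract a half of~$I_\ell$ that is collision-free modulo some prime~$\tilde P_\ell$, recover those coefficients via \cref{eq:LatticeDFT}, subtract, and iterate; the bound $L\le\log_2 s+1$ then follows from $s_\ell\le 2^{-\ell}s$.

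The only noteworthy divergence is in how the sample bound~\cref{eq:upper_bound_on_sum_of_primes} is assembled. The paper invokes \cref{lem:estimate_PqKm1} directly at each step (with $s$ replaced by~$s_\ell$), then separates the $s_\ell$- and $\tilde M$-dependence inside the second logarithm via the elementary inequality $\ln(xy)\le 2(\ln x)(\ln y)$ for $x,y\ge\e$, after which the sum over~$\ell$ telescopes as a geometric series. You instead aim for a uniform per-step bound $\tilde P_\ell\le c_1\,s_\ell(\log_2\tilde M)\ln(2\log_2\tilde M)$ from Rosser--Schoenfeld and then use $\sum_\ell s_\ell\le 2s$. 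Both routes lead to the same constant regime, and in both cases the exact constant~$8$ is obtained only after handling small-$(s_\ell,\tilde M)$ ranges by inspection --- something the paper, like you, leaves at the level of a sketch. Your choice to start the candidate primes at $2s_\ell$ rather than the paper's $s_\ell$ (as in \cref{lem:proof_Ps_1d}) is harmless for correctness but unnecessary and can only worsen constants.
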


\begin{proof}
	The proof is simply a recursive application of part of the previously discussed approach, so we only provide a sketch.

	We use only the first prime $ \tilde P_0 $ from \cref{lem:proof_Ps_1d} to determine a set of frequencies $ I_0 \subset I $ such that $ \Lambda(\boldz, \tilde P_0, I_0) $ is a reconstructing single rank\mbox{-}1 lattice with $ |I_0| \geq s / 2 $.
	Performing the reconstruction process in \cref{thm:not_more_than_half_of_frequencies_collide} for only frequencies in $ I_0 $ using samples from $ \Lambda(\boldz, \tilde P_0, I_0) $ recovers the corresponding Fourier coefficients exactly.
	This then defines the correspondence $ \nu(\boldk) = 0 $ for all $ \boldk \in I_0 $.
	Subtracting off the recovered polynomial terms and recursively repeating the process with the frequency set $ I \setminus I_0 $ gives \cref{equ:MultRank1CubatureReducingFrequencySet}.
	
	The upper bound on the number of samples is a result of \cref{lem:estimate_PqKm1}, noting that at each step, the cardinality of the frequency set is reduced by half.
	Splitting the dependence on $ s $ and $ \tilde M $ in the second logarithm using the inequality $ \ln(xy) \leq 2 (\ln x)(\ln y) $ for $ x, y \geq \e $ and estimating the resulting geometric series gives \cref{eq:upper_bound_on_sum_of_primes}.
\end{proof}

\subsection{Analysis of lattice construction}\label{sub:analysis_of_lattice_construction}
\begin{algorithm}[!ht]
	\caption{Deterministic construction of multiple rank\mbox{-}1 lattice suitable for reconstruction and approximation, according to \cref{thm:not_more_than_half_of_frequencies_collide} and \cref{lem:proof_Ps_1d}}
\label{alg:main1}
	\begin{algorithmic}[1]
		\Procedure{\textbf{DeterministicMR1L}}{}\\
		{\bfseries Input:}{ frequency set $I\subset\Z^d$, generating vector $\boldz\in\N_0^d$ of a reconstructing single rank\mbox{-}1 lattice for $I$}\\
		{\bfseries Output:}{ lattice sizes $\tilde{P}_0,\ldots,\tilde{P}_{L-1}$}
		\State\label{alg:main1:innerprod}Compute the set $J'_0:=\{\boldk\cdot\boldz\colon\boldk\in I\}$
		\State Determine $q\in\N$ s.t.\ $P_{q-1}< |I|\le P_q$, where $P_\ell$ is the $\ell$th prime{\label{alg:main1:detq}}
		\State List out $\mathcal{P}_{|I|}:=\left\{P_\ell \colon \ell=q,\ldots, q+\max\left(0,2(|I|-1)\ceil{-1 + \log_{P_q}(\tilde{M})}-1\right)\right\}$ with\label{alg:main1:detPs}
			\item[]\qquad$\tilde{M}:=\max_{k\in J'_0}{k}-\min_{h\in J'_0}{h}+1$
		\State Determine primes $\tilde{P}_0,\ldots,\tilde{P}_{L-1}\in\mathcal{P}_{|I|}$ such that
			$|J'_{r+1}|\le |J'_{r}|/2$ with
			\item[]\qquad 
			$
			J'_{r+1}:=
			\left\{h\in J'_r\,\colon h\equiv h'\imod{\tilde{P}_r}\text{ for at least one } h'\in J'_0\setminus\{h\}\right\}
			$ \; // cf.~\cref{eq:def_Jprime_r} 
			\label{alg:main1:detIprime}
		\EndProcedure
		\hrule		
		\item[]\rule{0pt}{1.5em}{\bfseries Runtime Complexity:}{ $\OO{|I|^2\,\log|I|\, \log{\tilde{M}}+\;|I|\left(d+(\log{\tilde{M}})\log(\log{\tilde{M}})\right)}$}
	\end{algorithmic}
\end{algorithm}

The approach analyzed in \cref{thm:not_more_than_half_of_frequencies_collide} provides a completely constructive, deterministic method for building reconstructing multiple rank\mbox{-}1 lattices from reconstructing single rank\mbox{-}1 lattices.
\Cref{alg:main1} summarizes the suggested approach in detail.
In the following, we analyze the runtime complexity.

We start by analyzing Line~\ref{alg:main1:innerprod} which is obviously in $\OO{d\,|I|}$.
The arithmetic complexity of Lines~\ref{alg:main1:detq} and \ref{alg:main1:detPs} are dominated by determining the
set of primes $\mathcal{P}_{|I|}$, which can be done in linear time with respect to $P_{q+K-1}\le C_1|I|(\log_{|I|}\tilde{M})\ln (C_2|I|\log_{|I|}\tilde{M})$ estimated in \cref{lem:estimate_PqKm1}, therefore requiring $\OO{|I|(\log{\tilde{M}})\log(\log{\tilde{M}})}$ arithmetic operations.

Line~\ref{alg:main1:detIprime} can be realized using two loops. In worst case, we have to determine the sets
\begin{equation}
\left\{h\in J_r'\,\colon h\equiv h'\imod{P_{\ell}}\text{ for at least one } h'\in J_0'\setminus\{h\}\right\}
\label{eq:alg:determineJell}
\end{equation}
for $r \in \{0,\ldots,\floor{\log_2s}\}$ and $\ell\in\{q,\ldots,q+\max\left(0,2(|I|-1)\ceil{-1 + \log_{P_q}(\tilde{M})}-1\right)\}$.
For fixed $r$ and $\ell$, we can determine~\cref{eq:alg:determineJell} in $\OO{|I|\log |I|}$, which yields
a total arithmetic complexity of $\OO{|I|^2\log|I|\log{\tilde{M}}}$ for Line~\ref{alg:main1:detIprime}.
Altogether, we observe a runtime complexity as stated in \cref{alg:main1}.

In the following, we comment on practical issues of \cref{alg:main1}.
Line~\ref{alg:main1:innerprod} might suffer from overflowing integers which can be
avoided by using higher precision integer representations. An alternative is to skip this precomputation and instead compute the inner products modulo $ P_{ \ell } $ on the fly in Line~\ref{alg:main1:detIprime}
which will increase the runtime complexity by a factor of~$d$ in the first summand. Note also that one does not necessarily need to compute~$\tilde{M}$ in advance. For the checks in Line~\ref{alg:main1:detIprime}, one might just start with the prime $P_q$ and increase the prime number using some \texttt{nextprime} function, which would increase the second summand in the runtime complexity.

Finally, we discuss the range of the numbers $\tilde{M}$ as well as the influence of the 
original single rank\mbox{-}1 lattice on the estimates in this paper.
In general, there are two different suitable approaches for finding a reconstructing single rank\mbox{-}1 lattice for
a given frequency index set~$I$. 
A simple approach is to just pick a rank\mbox{-}1 lattice $\Lambda(\boldz,M)$ that provides the reconstruction property from a simple number-theoretic point of view. For instance one can choose generating vectors~$\boldz$ and lattice sizes~$M$ that fulfill
\begin{align*}
z_1\in\N,\quad z_{i} &\ge (1+\max_{\boldk\in I}k_{i-1} - \min_{\boldh\in I}h_{i-1})z_{i-1}, \quad i=2,\ldots,d,\\
M&\ge (1+\max_{\boldk\in I}k_{d} - \min_{\boldh\in I}h_d)z_{d}.
\end{align*}
Clearly, even for extremely sparse frequency sets and moderate expansions of $I$ this approach will
lead to exponentially increasing $d$th components $z_d\ge 2^{d-1}$ and lattice sizes $M\ge 2^d$ even for
$\min_{j=1}^d(\max_{\boldk\in I}k_{j} - \min_{\boldh\in I}h_j)\ge 1$.

As in \cref{rem:discuss_tildeM}, this approach will lead to exponential increase in $\tilde{M}$ and
thus a linear dependence of the dimension $ d $ from $\log{\tilde{M}}$. From a theoretical point of view, this turns out to be disadvantageous for higher dimensions~$d$ due to the fact that the runtime complexity of \cref{alg:main1} as well as 
the estimates of the total number of sampling values in \cref{thm:not_more_than_half_of_frequencies_collide,theorem:lattice_reduction} will be affected by this factor.

A more costly way of determining reconstructing single rank\mbox{-}1 lattices is a suitable CBC construction as suggested in~\cite{kuo2019function}, which requires a computational complexity in $\OO{ds^2}$. The additional computational effort pays off when applying the theoretical bounds on the resulting lattice size $M$ to the estimates of this paper. In more detail, the CBC approach offers reconstructing rank\mbox{-}1 lattices with prime lattice sizes $M$ bounded from above by $M\le \max(s^2,2(N_I+1))$, cf.\ \cite{Kae2013, kuo2019function}.
As a consequence, the estimates in \cref{rem:discuss_tildeM} give $\tilde{M}\le CdN_I^2s^2$ or even $\tilde{M}\le C'R N_I s^2$ for $I$ a subset of an $\ell_1$-ball of radius $R$.
Thus, the estimates  on the required number of sampling values for unique reconstruction of multivariate trigonometric polynomials in $\Pi_I$, cf.~\cref{eq:details_sampling_nodes}, are respectively only either logarithmically dependent on $ d $ or even independent of $ d $.

\section{Numerical tests}

In this section, we investigate the statements of \cref{thm:not_more_than_half_of_frequencies_collide,theorem:lattice_reduction} numerically\footnote{All code is available at \url{https://www.math.msu.edu/~markiwen/Code.html}}. We consider different types of frequency sets~$I$. In particular, we use symmetric hyperbolic cross type frequency sets
\begin{equation}
I=
H_{R,\text{even}}^d := \left\{\boldk:=(k_1,\ldots,k_d)^\top\in(2\Z)^d \colon \prod_{t\in\{1,\ldots,d\}} \max(1,|k_t|) \leq R \right\}
\label{equ:def:H_R_even}
\end{equation}
with expansion parameter $R\in\N$, which results in $N_I \leq 2R$, in up to $d=9$ spatial dimensions. These frequency sets $H_{R,\text{even}}^d$ have the property that in each frequency component only even indices occur.
This matches the behavior of the Fourier support of the test function~$G_3^d$ introduced below in \cref{sec:numerics:symhceven:approx} which we approximate using samples on multiple rank-1 lattices, see also \cite{KaPoVo13, KaVo19} and \cite[section~2.3.5]{volkmerdiss}.

In addition, we use random frequency sets $I\subset ([-R,R]\cap\Z)^d$, which yield $N_I \leq 2R$, and we consider these in up to $d=10\,000$ spatial dimensions.

\subsection{Deterministic multiple rank-1 lattices generated by Algorithm~\ref{alg:main1} suitable for reconstruction and approximation}
\label{sec:numerics:lattices_alg1}

\subsubsection{Resulting numbers of samples and oversampling factors}

In the beginning, we determine the overall number of samples when applying \cref{alg:main1}. Up to an additive term of $1-L$, this corresponds to $\sum_{\ell=0}^{L-1} \tilde{P}_\ell$ in \cref{thm:not_more_than_half_of_frequencies_collide}, since the node~$\boldzero$ (point of origin) is contained in each of the resulting rank-1 lattices $\Lambda(\boldz,\tilde{P}_\ell)$.
We start with symmetric hyperbolic cross sets $I=H_{R,\text{even}}^d$ as defined in~\eqref{equ:def:H_R_even} and consider three different types of reconstructing single rank-1 lattices $\Lambda(\boldz,M,I)$ as input for \cref{alg:main1}.

First, we use the rank-1 lattices from \cite[Table~6.1]{KaPoVo13}, which were generated by the CBC method \cite[Algorithm~3.7]{kaemmererdiss}, as input for \cref{alg:main1}. We plot the results in \cref{fig:symhceven:samples:cbc} for spatial dimensions $d\in\{2,3,\ldots,9\}$ and with various refinements $R\in\N$ of $I=H_{R,\text{even}}^d$. 
The observed numbers of samples seem to behave slightly worse than linear with respect to the cardinality of the frequency set~$I$. The corresponding theoretical upper bounds according to \cref{thm:not_more_than_half_of_frequencies_collide} using~\eqref{eq:estimate2_tildeM} for $\tilde{M}$ are also shown as filled markers with dashed lines for spatial dimensions $d\in\{2,9\}$ in \cref{fig:symhceven:samples:cbc}. The plotted upper bounds are distinctly larger and their slopes seem to be slightly higher than those observed by plotting the numerical tests.

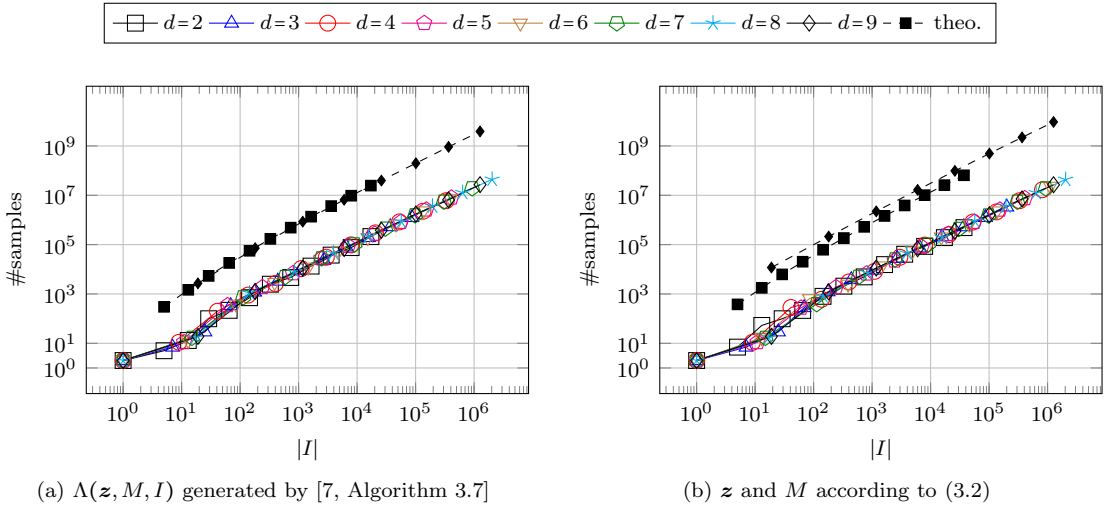
\begin{figure}[!ht]
	\begin{center}
	\begin{tikzpicture}
	\begin{axis}[font=\footnotesize,hide axis,xmin=0,xmax=1,ymin=0,ymax=1,width=\textwidth,legend style={legend cell align=left},legend columns=9]
	\addlegendimage{black,mark=square,mark size=3pt,mark options={solid}}
	\addlegendentry{$d\!=\!2$}
	\addlegendimage{blue,mark=triangle,mark size=3pt,mark options={solid}}
	\addlegendentry{$d\!=\!3$}
	\addlegendimage{red,mark=o,mark size=3pt,mark options={solid}}
	\addlegendentry{$d\!=\!4$}
	\addlegendimage{magenta,mark=pentagon,mark size=3pt,mark options={solid}}
	\addlegendentry{$d\!=\!5$}
	\addlegendimage{brown,mark=triangle,mark size=3pt,mark options={solid,rotate=180}}
	\addlegendentry{$d\!=\!6$}
	\addlegendimage{darkgreen,mark=pentagon,mark size=3pt,mark options={solid,rotate=180}}
	\addlegendentry{$d\!=\!7$}
	\addlegendimage{cyan,mark=star,mark size=3pt,mark options={solid}}
	\addlegendentry{$d\!=\!8$}
	\addlegendimage{black,mark=diamond,mark size=3pt,mark options={solid}}
	\addlegendentry{$d\!=\!9$}
	\addlegendimage{black,mark=square*,mark size=2pt,mark options={solid},dashed}
	\addlegendentry{theo.}
	\end{axis}
	\end{tikzpicture}
	\end{center}
	\vspace{-0.5em}
	\noindent
	\subfloat[{$\Lambda(\boldz,M,I)$ generated by \cite[Algorithm~3.7]{kaemmererdiss}}]{\label{fig:symhceven:samples:cbc}
		\begin{tikzpicture}[baseline]
		\begin{axis}[
		font=\footnotesize,
		enlarge x limits=true,
		enlarge y limits=true,
		height=0.38\textwidth,
		grid=major,
width=0.5\textwidth,
		xmin=1,xmax=2e6,
		ymin=1,ymax=2.41e10,
ytick={1,10,1000,100000,1e7,1e9},
xmode=log,
		ymode=log,
		xlabel={$|I|$},
		ylabel={\#samples},
		legend style={legend cell align=left,at={(1,1.43)}},
legend columns = 3,
		]
		\addplot[black,mark=square,mark size=3pt,mark options={solid}] coordinates {
(1,2) (5,5) (13,13) (29,99) (65,217) (145,707) (329,2487) (733,4655) (1633,13481) (3605,37563) (7913,75829) (17217,212081)
		};
\addplot[blue,mark=triangle,mark size=3pt,mark options={solid}] coordinates {
(1,2) (7,7) (25,29) (69,335) (177,1193) (441,3305) (1097,9183) (2693,23019) (6529,80807) (15645,192939) (37025,532699) (86593,1248219)
		};
\addplot[red,mark=o,mark size=3pt,mark options={solid}] coordinates {
(1,2) (9,11) (41,199) (137,901) (401,2643) (1105,10667) (2977,30985) (7897,97885) (20609,299789) (52953,833627) (133905,2252871) (333457,6049405)
		};
\addplot[magenta,mark=pentagon,mark size=3pt,mark options={solid}] coordinates {
(1,2) (11,11) (61,347) (241,1879) (801,6815) (2433,27465) (7073,87881) (20073,273487) (55873,827399) (152713,2619383) (409825,7984285)
		};
\addplot[brown,mark=triangle,mark size=3pt,mark options={solid,rotate=180}] coordinates {
(1,2) (13,13) (85,387) (389,2931) (1457,12273) (4865,55887) (15241,192345) (46069,682999) (135905,2336985) (392717,7276429)
		};
\addplot[darkgreen,mark=pentagon,mark size=3pt,mark options={solid,rotate=180}] coordinates {
(1,2) (15,17) (113,745) (589,4977) (2465,28313) (9017,103899) (30409,451581) (97709,1569119) (304321,5632663) (925445,19558255)
		};
\addplot[cyan,mark=star,mark size=3pt,mark options={solid}] coordinates {
(1,2) (17,17) (145,1081) (849,8093) (3937,45429) (15713,216701) (56961,853129) (194353,3609763) (637697,12776127) (2034289,43889827)
		};
\addplot[black,mark=diamond,mark size=3pt,mark options={solid}] coordinates {
(1,2) (19,19) (181,1267) (1177,11073) (6001,69775) (26017,381579) (101185,1741509) (366289,6532405) (1264513,27025383)
		};
\addplot[black,mark=square*,mark size=2pt,mark options={solid},dashed] coordinates {
(5,3.004448e+02) (13,1.472826e+03) (29,5.373409e+03) (65,1.800891e+04) (145,5.622379e+04) (329,1.704671e+05) (733,4.881846e+05) (1633,1.358844e+06) (3605,3.660925e+06) (7913,9.623816e+06) (17217,2.468241e+07)
};
\addplot[forget plot,black,mark=diamond*,mark size=2pt,mark options={solid}, dashed] coordinates {
(19,2.699610e+03) (181,7.181055e+04) (1177,8.541646e+05) (6001,6.571028e+06) (26017,3.945211e+07) (101185,1.993941e+08) (366289,9.171609e+08) (1264513,3.880749e+09)
		};		
		\end{axis}
		\end{tikzpicture}
	}
\hfill
	\subfloat[$\boldz$ and $M$ according to~\eqref{eq:lat1}]{\label{fig:symhceven:samples:lat1}
		\begin{tikzpicture}[baseline]
		\begin{axis}[
		font=\footnotesize,
		enlarge x limits=true,
		enlarge y limits=true,
		height=0.38\textwidth,
		grid=major,
width=0.5\textwidth,
		xmin=1,xmax=2e6,
		ymin=1,ymax=2.41e10,
ytick={1,10,1000,100000,1e7,1e9},
xmode=log,
		ymode=log,
		xlabel={$|I|$},
		ylabel={\#samples},
		legend style={legend cell align=left,at={(1,1.43)}},
legend columns = 3,
		]
		\addplot[black,mark=square,mark size=3pt,mark options={solid}] coordinates {
(1,2) (5,7) (13,53) (29,99) (65,215) (145,801) (329,2071) (733,4929) (1633,15495) (3605,40725) (7913,84909) (17217,192081) (37241,489031)
		};
\addplot[blue,mark=triangle,mark size=3pt,mark options={solid}] coordinates {
(1,2) (7,7) (25,29) (69,279) (177,1225) (441,3811) (1097,9255) (2693,27637) (6529,73385) (15645,207443) (37025,529149) (86593,1256205) (200225,3349227)
		};
\addplot[red,mark=o,mark size=3pt,mark options={solid}] coordinates {
(1,2) (9,11) (41,283) (137,607) (401,3075) (1105,8275) (2977,30681) (7897,96145) (20609,255887) (52953,824761) (133905,2129745) (333457,6037843) (818449,16829701)
		};
\addplot[magenta,mark=pentagon,mark size=3pt,mark options={solid}] coordinates {
(1,2) (11,11) (61,275) (241,1877) (801,6671) (2433,22383) (7073,80365) (20073,269495) (55873,823821) (152713,2600909) (409825,7529071)
		};
\addplot[brown,mark=triangle,mark size=3pt,mark options={solid,rotate=180}] coordinates {
(1,2) (13,17) (85,645) (389,2491) (1457,13731) (4865,55463) (15241,204513) (46069,676435) (135905,2316725) (392717,7222379) (1112313,23206405)
		};
\addplot[darkgreen,mark=pentagon,mark size=3pt,mark options={solid,rotate=180}] coordinates {
(1,2) (15,17) (113,393) (589,5043) (2465,25515) (9017,102115) (30409,441557) (97709,1544269) (304321,5600069) (925445,19312573) };
\addplot[cyan,mark=star,mark size=3pt,mark options={solid}] coordinates {
(1,2) (17,17) (145,811) (849,7089) (3937,44771) (15713,197643) (56961,898975) (194353,3552845) (637697,12583473) (2034289,43452535)
		};
\addplot[black,mark=diamond,mark size=3pt,mark options={solid}] coordinates {
(1,2) (19,19) (181,1213) (1177,11263) (6001,67969) (26017,352333) (101185,1615519) (366289,7133211) (1264513,26693535)
		};
\addplot[black,mark=square*,mark size=2pt,mark options={solid},dashed] coordinates {
(5,3.782332e+02) (13,1.767574e+03) (29,6.215233e+03) (65,2.027008e+04) (145,6.203287e+04) (329,1.853090e+05) (733,5.247574e+05) (1633,1.447951e+06) (3605,3.874084e+06) (7913,1.012727e+07) (17217,2.585406e+07) (37241,6.480749e+07)
};
\addplot[forget plot,black,mark=diamond*,mark size=2pt,mark options={solid}, dashed] coordinates {
(19,1.177435e+04) (181,2.125965e+05) (1177,2.232009e+06) (6001,1.656532e+07) (26017,9.759745e+07) (101185,4.917523e+08) (366289,2.227825e+09) (1264513,9.378038e+09)
		};		
		\end{axis}
		\end{tikzpicture}
	}
	\caption{Overall \#samples $=1-L+\sum_{\ell=0}^{L-1} \tilde{P}_\ell$ for symmetric hyperbolic cross index sets $I=H_{R,\text{even}}^d$. Filled markers with dashed lines represent theoretical upper bounds from \cref{thm:not_more_than_half_of_frequencies_collide} for $d\in\{2,9\}$ calculated using \cref{eq:estimate2_tildeM}.}\label{fig:symhceven:samples}
\end{figure}

Second, we consider reconstructing single rank-1 lattices $\Lambda(\boldz,M,I)$ with
\begin{equation}\label{eq:lat1}
\boldz:=(1,N_I+1,(N_I+1)^2,\ldots,(N_I+1)^{d-1})^\top \text{ and } M:=(N_I+1)^d=(2R+1)^d,
\end{equation}
where $N_I=2R$ in our case, and we show the results in \cref{fig:symhceven:samples:lat1}. We observe that the obtained numbers of samples are similar to the ones in \cref{fig:symhceven:samples:cbc}, and the theoretical upper bounds according to \cref{thm:not_more_than_half_of_frequencies_collide} using~\eqref{eq:estimate2_tildeM} for $\tilde{M}$ are slightly higher due to increased components of the generating vector~$\boldz$ of the reconstructing single rank-1 lattices.

Third, we apply \cref{alg:main1} to reconstructing single rank-1 lattices $\Lambda(\boldz,M,I)$ as considered in \cite[section~6]{Iw13}. In detail, we choose
\begin{equation}\label{eq:lat2}
\begin{split}
& M:=\prod_{t\in\{1,2,\ldots,d\}} q_t \text{ and } \boldz:=(M/q_1,M/q_2,\ldots,M/q_d)^\top, \\ &\text{where } q_1 := d N_I+d+1 \text{ and } q_{t+1}:=\min\{p\in\N\colon p > q_t \text{ and } p \text{ prime}\}.
\end{split}
\end{equation}
Here, the observed numerical results yield plots that do not differ recognizably from \cref{fig:symhceven:samples:lat1}, and we therefore omit these plots. We would like to point out, that the theoretical upper bounds for that kind of reconstructing single rank-1 lattices are slightly worse than those plotted
in \cref{fig:symhceven:samples:lat1}, cf.\,\cref{rem:discuss_tildeM}.

Note that when running \cref{alg:main1} for single rank-1 lattices $\Lambda(\boldz,M,I)$ of type~\eqref{eq:lat1} and~\eqref{eq:lat2} in practice, one may need to deal with limited numeric precision in the computer arithmetic. For instance, for higher spatial dimensions, some components $z_t$ of the generating vector $\boldz$ may become larger than 64-bit integers. This means that the sets $J'_r$ may have to be computed carefully and repeatedly modulo each considered prime $P\in\mathcal{P}_{|I|}$ when searching for the primes $\tilde{P}_0,\ldots,\tilde{P}_{L-1}$ in Line~\ref{alg:main1:detIprime} of \cref{alg:main1}.

In order to have a closer look at the number of samples, we visualize the oversampling factor \#samples$\,/\,|I| = (1-L+\sum_{\ell=0}^{L-1} \tilde{P}_\ell)/|I|$ in \cref{fig:samples:symhceven:oversampling}. For the considered test cases and the three different types of lattices, we observe that the oversampling factors are below $1.7\,\ln |I| + 3$ for $|I|>1$. This is distinctly smaller than the theoretical upper bounds in \cref{thm:not_more_than_half_of_frequencies_collide} suggest, which have a constant of $\approx 5.7$ and additional logarithmic factors depending on~$\tilde{M}$. For instance in \cref{fig:samples:symhceven:oversampling:cbc}, for $I=H_{256,\text{even}}^9$ (cardinality $|I|=1\,264\,513$ and \#samples $=27\,025\,383$), the oversampling factor is $\approx 21.37$ whereas the corresponding upper bound for the oversampling factor is $\approx 3\,069$ according to \cref{thm:not_more_than_half_of_frequencies_collide} using~\eqref{eq:estimate2_tildeM} for $\tilde{M}$. The plots for reconstructing single rank-1 lattices $\Lambda(\boldz,M,I)$ according to~\eqref{eq:lat2} look similar to the ones according to~\eqref{eq:lat1}, where the latter are shown in \cref{fig:samples:symhceven:oversampling:lat1}. Moreover, we only observe a relatively small difference compared to \cref{fig:samples:symhceven:oversampling:cbc}.

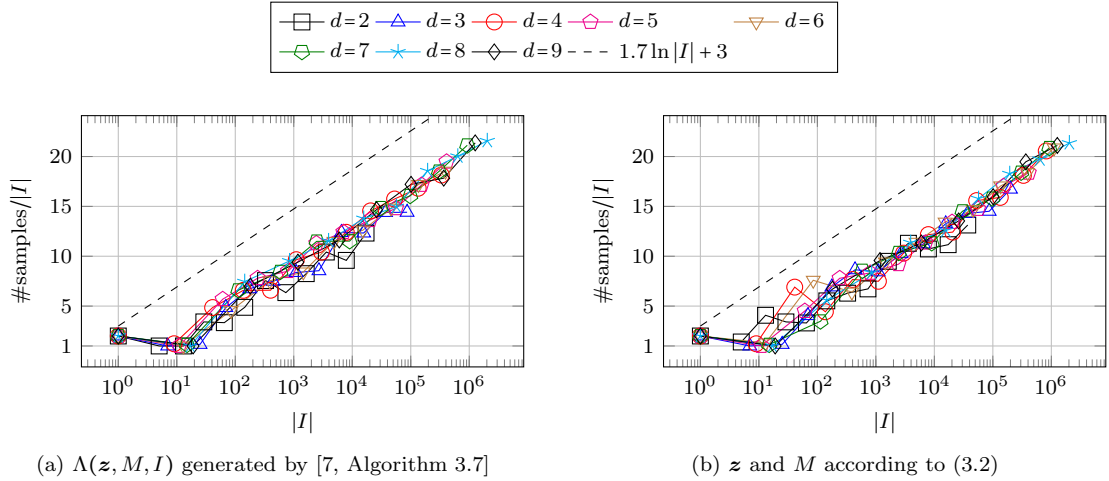
\begin{figure}[!ht]
	\begin{center}
	\begin{tikzpicture}
	\begin{axis}[font=\footnotesize,hide axis,xmin=0,xmax=1,ymin=0,ymax=1,width=\textwidth,legend style={legend cell align=left},legend columns=5]
	\addlegendimage{black,mark=square,mark size=3pt,mark options={solid}}
	\addlegendentry{$d\!=\!2$}
	\addlegendimage{blue,mark=triangle,mark size=3pt,mark options={solid}}
	\addlegendentry{$d\!=\!3$}
	\addlegendimage{red,mark=o,mark size=3pt,mark options={solid}}
	\addlegendentry{$d\!=\!4$}
	\addlegendimage{magenta,mark=pentagon,mark size=3pt,mark options={solid}}
	\addlegendentry{$d\!=\!5$}
	\addlegendimage{brown,mark=triangle,mark size=3pt,mark options={solid,rotate=180}}
	\addlegendentry{$d\!=\!6$}
	\addlegendimage{darkgreen,mark=pentagon,mark size=3pt,mark options={solid,rotate=180}}
	\addlegendentry{$d\!=\!7$}
	\addlegendimage{cyan,mark=star,mark size=3pt,mark options={solid}}
	\addlegendentry{$d\!=\!8$}
	\addlegendimage{black,mark=diamond,mark size=3pt,mark options={solid}}
	\addlegendentry{$d\!=\!9$}
	\addlegendimage{black,domain=1:1e6, samples=100, dashed}
	\addlegendentry{$1.7\, \ln |I|+3$}
	\end{axis}
	\end{tikzpicture}
	\end{center}
	\vspace{-0.5em}
\noindent
\subfloat[{$\Lambda(\boldz,M,I)$ generated by \cite[Algorithm~3.7]{kaemmererdiss}}]{\label{fig:samples:symhceven:oversampling:cbc}
		\begin{tikzpicture}[baseline]
\begin{axis}[
font=\footnotesize,
enlarge x limits=true,
enlarge y limits=true,
height=0.33\textwidth,
grid=major,
width=0.5\textwidth,
xmin=1,xmax=2e6,
ymin=1,ymax=22,
ytick={1,5,10,15,20},
xmode=log,
xlabel={$|I|$},
ylabel={\#samples$/|I|$},
legend style={legend cell align=left,at={(1.0,1.53)}},
legend columns = 3,
]
\addplot[black,mark=square,mark size=3pt,mark options={solid}] coordinates {
(1,2.000e+00) (5,1.000e+00) (13,1.000e+00) (29,3.414e+00) (65,3.338e+00) (145,4.876e+00) (329,7.559e+00) (733,6.351e+00) (1633,8.255e+00) (3605,1.042e+01) (7913,9.583e+00) (17217,1.232e+01)
};
\addplot[blue,mark=triangle,mark size=3pt,mark options={solid}] coordinates {
(1,2.000e+00) (7,1.000e+00) (25,1.160e+00) (69,4.855e+00) (177,6.740e+00) (441,7.494e+00) (1097,8.371e+00) (2693,8.548e+00) (6529,1.238e+01) (15645,1.233e+01) (37025,1.439e+01) (86593,1.441e+01)
};
\addplot[red,mark=o,mark size=3pt,mark options={solid}] coordinates {
(1,2.000e+00) (9,1.222e+00) (41,4.854e+00) (137,6.577e+00) (401,6.591e+00) (1105,9.653e+00) (2977,1.041e+01) (7897,1.240e+01) (20609,1.455e+01) (52953,1.574e+01) (133905,1.682e+01) (333457,1.814e+01)
};
\addplot[magenta,mark=pentagon,mark size=3pt,mark options={solid}] coordinates {
(1,2.000e+00) (11,1.000e+00) (61,5.689e+00) (241,7.797e+00) (801,8.508e+00) (2433,1.129e+01) (7073,1.242e+01) (20073,1.362e+01) (55873,1.481e+01) (152713,1.715e+01) (409825,1.948e+01)
};
\addplot[brown,mark=triangle,mark size=3pt,mark options={solid,rotate=180}] coordinates {
(1,2.000e+00) (13,1.000e+00) (85,4.553e+00) (389,7.535e+00) (1457,8.423e+00) (4865,1.149e+01) (15241,1.262e+01) (46069,1.483e+01) (135905,1.720e+01) (392717,1.853e+01)
};
\addplot[darkgreen,mark=pentagon,mark size=3pt,mark options={solid,rotate=180}] coordinates {
(1,2.000e+00) (15,1.133e+00) (113,6.593e+00) (589,8.450e+00) (2465,1.149e+01) (9017,1.152e+01) (30409,1.485e+01) (97709,1.606e+01) (304321,1.851e+01) (925445,2.113e+01)
};
\addplot[cyan,mark=star,mark size=3pt,mark options={solid}] coordinates {
(1,2.000e+00) (17,1.000e+00) (145,7.455e+00) (849,9.532e+00) (3937,1.154e+01) (15713,1.379e+01) (56961,1.498e+01) (194353,1.857e+01) (637697,2.003e+01) (2034289,2.158e+01)
};
\addplot[black,mark=diamond,mark size=3pt,mark options={solid}] coordinates {
(1,2.000e+00) (19,1.000e+00) (181,7.000e+00) (1177,9.408e+00) (6001,1.163e+01) (26017,1.467e+01) (101185,1.721e+01) (366289,1.783e+01) (1264513,2.137e+01)
};
\addplot[black,domain=1:1e6, samples=100, dashed]{1.7*ln(x)+3};
\end{axis}
\end{tikzpicture}
}
\hfill
\subfloat[$\boldz$ and $M$ according to~\eqref{eq:lat1}]{\label{fig:samples:symhceven:oversampling:lat1}
\begin{tikzpicture}[baseline]
	\begin{axis}[
	font=\footnotesize,
	enlarge x limits=true,
	enlarge y limits=true,
	height=0.33\textwidth,
grid=major,
width=0.5\textwidth,
xmin=1,xmax=2e6,
ymin=1,ymax=22,
ytick={1,5,10,15,20},
xmode=log,
	xlabel={$|I|$},
	ylabel={\#samples$/|I|$},
	legend style={legend cell align=left,at={(1.0,1.53)}},
	legend columns = 3,
]
	\addplot[black,mark=square,mark size=3pt,mark options={solid}] coordinates {
(1,2.000e+00) (5,1.400e+00) (13,4.077e+00) (29,3.414e+00) (65,3.308e+00) (145,5.524e+00) (329,6.295e+00) (733,6.724e+00) (1633,9.489e+00) (3605,1.130e+01) (7913,1.073e+01) (17217,1.116e+01) (37241,1.313e+01)
	};
\addplot[blue,mark=triangle,mark size=3pt,mark options={solid}] coordinates {
(1,2.000e+00) (7,1.000e+00) (25,1.160e+00) (69,4.043e+00) (177,6.921e+00) (441,8.642e+00) (1097,8.437e+00) (2693,1.026e+01) (6529,1.124e+01) (15645,1.326e+01) (37025,1.429e+01) (86593,1.451e+01) (200225,1.673e+01)
	};
\addplot[red,mark=o,mark size=3pt,mark options={solid}] coordinates {
(1,2.000e+00) (9,1.222e+00) (41,6.902e+00) (137,4.431e+00) (401,7.668e+00) (1105,7.489e+00) (2977,1.031e+01) (7897,1.217e+01) (20609,1.242e+01) (52953,1.558e+01) (133905,1.590e+01) (333457,1.811e+01) (818449,2.056e+01)
	};
\addplot[magenta,mark=pentagon,mark size=3pt,mark options={solid}] coordinates {
(1,2.000e+00) (11,1.000e+00) (61,4.508e+00) (241,7.788e+00) (801,8.328e+00) (2433,9.200e+00) (7073,1.136e+01) (20073,1.343e+01) (55873,1.474e+01) (152713,1.703e+01) (409825,1.837e+01)
	};
\addplot[brown,mark=triangle,mark size=3pt,mark options={solid,rotate=180}] coordinates {
(1,2.000e+00) (13,1.308e+00) (85,7.588e+00) (389,6.404e+00) (1457,9.424e+00) (4865,1.140e+01) (15241,1.342e+01) (46069,1.468e+01) (135905,1.705e+01) (392717,1.839e+01) (1112313,2.086e+01)
	};
\addplot[darkgreen,mark=pentagon,mark size=3pt,mark options={solid,rotate=180}] coordinates {
(1,2.000e+00) (15,1.133e+00) (113,3.478e+00) (589,8.562e+00) (2465,1.035e+01) (9017,1.132e+01) (30409,1.452e+01) (97709,1.580e+01) (304321,1.840e+01) (925445,2.087e+01) };
\addplot[cyan,mark=star,mark size=3pt,mark options={solid}] coordinates {
(1,2.000e+00) (17,1.000e+00) (145,5.593e+00) (849,8.350e+00) (3937,1.137e+01) (15713,1.258e+01) (56961,1.578e+01) (194353,1.828e+01) (637697,1.973e+01) (2034289,2.136e+01)
	};
\addplot[black,mark=diamond,mark size=3pt,mark options={solid}] coordinates {
(1,2.000e+00) (19,1.000e+00) (181,6.702e+00) (1177,9.569e+00) (6001,1.133e+01) (26017,1.354e+01) (101185,1.597e+01) (366289,1.947e+01) (1264513,2.111e+01)
	};
\addplot[black,domain=1:1e6, samples=100, dashed]{1.7*ln(x) + 3};
\end{axis}
	\end{tikzpicture}
}
\caption{Oversampling factors for deterministic reconstructing multiple rank-1 lattices for symmetric hyperbolic cross index sets~$H_{R,\text{even}}^d$.}\label{fig:samples:symhceven:oversampling}
\end{figure}

Next, we change the setting and use frequency sets $I$ drawn uniformly randomly from cubes $[-R,R]^d\cap\Z^d$. We generate reconstructing single rank-1 lattices $\Lambda(\boldz,M,I)$ using \cite[Algorithm~3.7]{kaemmererdiss}.
Then, we apply \cref{alg:main1} in order to deterministically generate reconstructing multiple rank-1 lattices. We repeat the test 10 times for each setting with newly randomly chosen frequency sets~$I$ and determine the maximum number of samples over the 10 repetitions. For frequency set sizes $|I|\in\{10,100,1\,000,10\,000\}$ in $d\in\{2,3,4,6,10,100,1\,000,10\,000\}$ spatial dimensions and additionally $|I|=100\,000$ for some of the aforementioned spatial dimensions $d$, we visualize the resulting oversampling factors in \cref{fig:samples:rand:oversampling} for expansion parameter $R=64$ ($N_I\leq 128$). Using different reconstructing single rank-1 lattices $\Lambda(\boldz,M, I)$ as in \cref{fig:samples:symhceven:oversampling} changes the oversampling factors only slightly, and the oversampling factors are still well below $1.7\, \ln |I| + 3$, compare \cref{fig:samples:rand:oversampling:cbc,fig:samples:rand:oversampling:lat1}. The plots for reconstructing single rank\mbox{-}1 lattices $\Lambda(\boldz,M,I)$ according to~\eqref{eq:lat2} are omitted since they look very similar to \cref{fig:samples:rand:oversampling:lat1}.
As mentioned before, we have to take care of possible issues with numeric precision when running \cref{alg:main1} on reconstructing single rank-1 lattices of type~\eqref{eq:lat1} and~\eqref{eq:lat2} in practice.

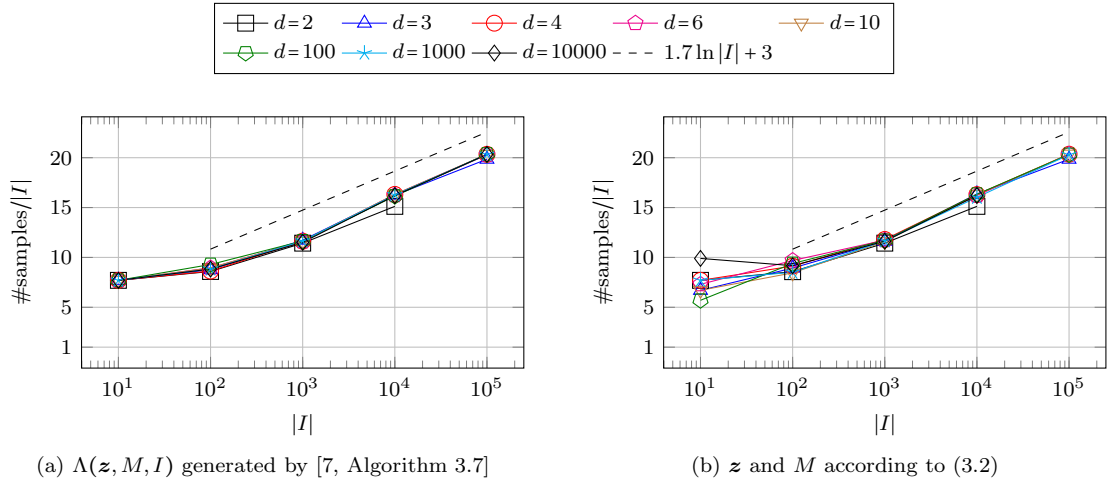
\begin{figure}[!ht]
	\begin{center}
	\begin{tikzpicture}
	\begin{axis}[font=\footnotesize,hide axis,xmin=0,xmax=1,ymin=0,ymax=1,width=\textwidth,legend style={legend cell align=left},legend columns=5]
	\addlegendimage{black,mark=square,mark size=3pt,mark options={solid}}
	\addlegendentry{$d\!=\!2$}
	\addlegendimage{blue,mark=triangle,mark size=3pt,mark options={solid}}
	\addlegendentry{$d\!=\!3$}
	\addlegendimage{red,mark=o,mark size=3pt,mark options={solid}}
	\addlegendentry{$d\!=\!4$}
	\addlegendimage{magenta,mark=pentagon,mark size=3pt,mark options={solid}}
	\addlegendentry{$d\!=\!6$}
	\addlegendimage{brown,mark=triangle,mark size=3pt,mark options={solid,rotate=180}}
	\addlegendentry{$d\!=\!10$}
	\addlegendimage{darkgreen,mark=pentagon,mark size=3pt,mark options={solid,rotate=180}}
	\addlegendentry{$d\!=\!100$}
	\addlegendimage{cyan,mark=star,mark size=3pt,mark options={solid}}
	\addlegendentry{$d\!=\!1000$}
	\addlegendimage{black,mark=diamond,mark size=3pt,mark options={solid}}
	\addlegendentry{$d\!=\!10000$}
	\addlegendimage{black,domain=100:1e5, samples=100, dashed}
	\addlegendentry{$1.7 \, \ln |I| + 3$}
	\end{axis}
	\end{tikzpicture}
\end{center}
\vspace{-0.5em}
\noindent
	\subfloat[{$\Lambda(\boldz,M,I)$ generated by \cite[Algorithm~3.7]{kaemmererdiss}}]{\label{fig:samples:rand:oversampling:cbc}
		\begin{tikzpicture}[baseline]
		\begin{axis}[
		font=\footnotesize,
		enlarge x limits=true,
		enlarge y limits=true,
		height=0.33\textwidth,
		grid=major,
		width=0.5\textwidth,
		ymin=1,ymax=22,
		ytick={1,5,10,15,20},
		xmode=log,
		xlabel={$|I|$},
		ylabel={\#samples$/|I|$},
		legend style={legend cell align=left,at={(1.0,1.53)},transpose legend},
		legend columns = 3,
		]
		\addplot[black,mark=square,mark size=3pt,mark options={solid}] coordinates {
(10,7.700e+00) (100,8.590e+00) (1000,1.143e+01) (10000,1.512e+01)
		};
\addplot[blue,mark=triangle,mark size=3pt,mark options={solid}] coordinates {
(10,7.700e+00) (100,8.770e+00) (1000,1.173e+01) (10000,1.624e+01) (100000,1.985e+01)
		};
\addplot[red,mark=o,mark size=3pt,mark options={solid}] coordinates {
(10,7.700e+00) (100,8.630e+00) (1000,1.163e+01) (10000,1.632e+01) (100000,2.034e+01)
		};
\addplot[magenta,mark=pentagon,mark size=3pt,mark options={solid}] coordinates {
(10,7.700e+00) (100,8.950e+00) (1000,1.161e+01) (10000,1.623e+01) (100000,2.029e+01)
		};
\addplot[brown,mark=triangle,mark size=3pt,mark options={solid,rotate=180}] coordinates {
(10,7.700e+00) (100,8.950e+00) (1000,1.162e+01) (10000,1.629e+01) (100000,2.032e+01)
		};
\addplot[darkgreen,mark=pentagon,mark size=3pt,mark options={solid,rotate=180}] coordinates {
(10,7.700e+00) (100,9.290e+00) (1000,1.165e+01) (10000,1.615e+01) (100000,2.036e+01)
		};
\addplot[cyan,mark=star,mark size=3pt,mark options={solid}] coordinates {
(10,7.700e+00) (100,8.830e+00) (1000,1.164e+01) (10000,1.628e+01) (100000,2.032e+01)
		};
\addplot[black,mark=diamond,mark size=3pt,mark options={solid}] coordinates {
(10,7.700e+00) (100,8.830e+00) (1000,1.151e+01) (10000,1.620e+01) (100000,2.033e+01)
		};
\addplot[black,domain=100:1e5, samples=100, dashed]{1.7*ln(x)+3};
\end{axis}
		\end{tikzpicture}
	}
	\hfill
	\subfloat[$\boldz$ and $M$ according to~\eqref{eq:lat1}]{\label{fig:samples:rand:oversampling:lat1}
		\begin{tikzpicture}[baseline]
		\begin{axis}[
		font=\footnotesize,
		enlarge x limits=true,
		enlarge y limits=true,
		height=0.33\textwidth,
		grid=major,
		width=0.5\textwidth,
		ymin=1,ymax=22,
		ytick={1,5,10,15,20},
		xmode=log,
		xlabel={$|I|$},
		ylabel={\#samples$/|I|$},
		legend style={legend cell align=left,at={(1.0,1.53)},transpose legend},
		legend columns = 3,
		]
		\addplot[black,mark=square,mark size=3pt,mark options={solid}] coordinates {
(10,7.700e+00) (100,8.570e+00) (1000,1.143e+01) (10000,1.512e+01)
		};
\addplot[blue,mark=triangle,mark size=3pt,mark options={solid}] coordinates {
(10,6.700e+00) (100,8.930e+00) (1000,1.163e+01) (10000,1.637e+01) (100000,1.985e+01)
		};
\addplot[red,mark=o,mark size=3pt,mark options={solid}] coordinates {
(10,7.700e+00) (100,9.130e+00) (1000,1.180e+01) (10000,1.630e+01) (100000,2.035e+01)
		};
\addplot[magenta,mark=pentagon,mark size=3pt,mark options={solid}] coordinates {
(10,7.300e+00) (100,9.690e+00) (1000,1.173e+01) (10000,1.621e+01) (100000,2.035e+01)
		};
\addplot[brown,mark=triangle,mark size=3pt,mark options={solid,rotate=180}] coordinates {
(10,6.700e+00) (100,8.450e+00) (1000,1.158e+01) (10000,1.630e+01) (100000,2.036e+01)
		};
\addplot[darkgreen,mark=pentagon,mark size=3pt,mark options={solid,rotate=180}] coordinates {
(10,5.700e+00) (100,9.390e+00) (1000,1.171e+01) (10000,1.633e+01) (100000,2.032e+01)
		};
\addplot[cyan,mark=star,mark size=3pt,mark options={solid}] coordinates {
(10,7.700e+00) (100,8.510e+00) (1000,1.167e+01) (10000,1.603e+01) (100000,2.029e+01)
		};
\addplot[black,mark=diamond,mark size=3pt,mark options={solid}] coordinates {
(10,9.900e+00) (100,9.170e+00) (1000,1.163e+01) (10000,1.621e+01)
		};
\addplot[black,domain=100:1e5, samples=100, dashed]{1.7*ln(x)+3};
\end{axis}
		\end{tikzpicture}
	}
	\caption{Oversampling factors for deterministic reconstructing multiple rank-1 lattices for random frequency sets $I\subset\{-64,-63,\ldots,64\}^d$.}\label{fig:samples:rand:oversampling}
\end{figure}

\FloatBarrier
\subsubsection{Improvement of numbers of samples compared to single rank-1 lattices constructed component-by-component}

For the resulting deterministic reconstructing multiple rank-1 lattices generated by \cref{alg:main1} in the previous subsection, one aspect of particular interest is the total number of nodes compared to the reconstructing single rank-1 lattices, which are given as an input to the algorithm. We investigate this in more detail for the case of lattices generated component-by-component by \cite[Algorithm~3.7]{kaemmererdiss}. These reconstructing single rank-1 lattices $\Lambda(\boldz,M,I)$ are specifically tailored to the structure of the corresponding frequency sets~$I$. We do not consider the case when \cref{alg:main1} is applied to single rank-1 lattices of type~\eqref{eq:lat1} or~\eqref{eq:lat2} as these ones are typically extremely large compared to the cardinality~$|I|$ of the frequency sets~$I$.

First, we start with symmetric hyperbolic cross index sets $I=H_{R,\text{even}}^d$ and reconstructing single rank-1 lattices $\Lambda(\boldz,M,I)$ generated by \cite[Algorithm~3.7]{kaemmererdiss}. In \cref{fig:samples:symhceven:ratio:cbc}, the obtained \#samples from \cref{fig:symhceven:samples:cbc} is divided by the size~$M$ of the single rank-1 lattice. We observe that for smaller expansion parameters~$R$ and consequently smaller cardinalities~$|I|$, the generated multiple rank\mbox{-}1 lattices still consist of more nodes than the corresponding single rank\mbox{-}1 lattices and therefore the ratio is larger than one. One main reason for this behavior is that for the component-by-component constructed single rank\mbox{-}1 lattices, the number of nodes is initially much less than the worst case upper bounds of almost $\mathcal{O}(|I|^2)$ suggest, cf.\ \cite[section~3.8.2]{kaemmererdiss} for a detailed discussion.
 Once a certain expansion~$N_I$ and cardinality~$|I|$ have been reached, the multiple rank-1 lattices outperform the single rank-1 lattices, yielding ratios around $0.1$ in \cref{fig:samples:symhceven:ratio:cbc}, i.e., \cref{alg:main1}
reduces the number of sampling nodes by 9/10.

Second, we consider the randomly generated frequency sets from \cref{fig:samples:rand:oversampling:cbc}. In \cref{fig:samples:rand:ratio:cbc}, we visualize the ratios of the number of nodes of the deterministic reconstructing multiple rank-1 lattices generated by \cref{alg:main1} over the lattice sizes $M$ of the reconstructing single rank-1 lattices generated by \cite[Algorithm~3.7]{kaemmererdiss}.
For the spatial dimensions $d\geq 4$ considered in \cref{fig:samples:rand:oversampling:cbc}, the ratios decrease rapidly for increasing cardinality~$|I|$, and we do not observe any noticeable dependence on the spatial dimension~$d$.
Note that in the case $d=2$, the ratios are close to or above one since the cube $\{-64,-63,\ldots,64\}^2$ of possible frequencies only has cardinality 16\,641 and the single rank-1 lattices already have small oversampling factors $M/|I|<16$. Similarly, in the case $d=3$ for cardinality $|I|=10^5$, the frequency set~$I$ fills approximately 1/20 of the cube $\{-64,-63,\ldots,64\}^3$ and again the low oversampling factors $M/|I|<22$ of the single rank-1 lattices are hard to beat for multiple rank-1 lattices.

\begin{figure}[!ht]
	\subfloat[symmetric hyperbolic cross index sets $I=H_{R,\text{even}}^d$]{\label{fig:samples:symhceven:ratio:cbc}
		\begin{tikzpicture}[baseline]
		\begin{axis}[
		font=\footnotesize,
		enlarge x limits=true,
		enlarge y limits=true,
		height=0.33\textwidth,
		grid=major,
width=0.48\textwidth,
		ymin=1.5e-2,ymax=1e1,
xmode=log,
		ymode=log,
		xlabel={$|I|$},
		ylabel={\#samples$/M$},
		legend style={legend cell align=left,at={(1.0,1.4)}},
		legend columns = 4,
		]
		\addplot[black,mark=square,mark size=3pt,mark options={solid}] coordinates {
(1,2.000e+00) (5,1.000e+00) (13,1.000e+00) (29,2.415e+00) (65,1.497e+00) (145,1.297e+00) (329,1.177e+00) (733,5.594e-01) (1633,4.082e-01) (3605,2.855e-01) (7913,1.444e-01) (17217,1.010e-01)
		};
		\addlegendentry{$d=2$}
		\addplot[blue,mark=triangle,mark size=3pt,mark options={solid}] coordinates {
(1,2.000e+00) (7,1.000e+00) (25,1.000e+00) (69,3.454e+00) (177,3.020e+00) (441,1.920e+00) (1097,1.779e+00) (2693,1.067e+00) (6529,9.462e-01) (15645,5.371e-01) (37025,3.850e-01) (86593,2.305e-01)
		};
		\addlegendentry{$d=3$}
		\addplot[red,mark=o,mark size=3pt,mark options={solid}] coordinates {
(1,2.000e+00) (9,1.222e+00) (41,4.061e+00) (137,3.506e+00) (401,2.179e+00) (1105,1.834e+00) (2977,1.439e+00) (7897,8.817e-01) (20609,6.170e-01) (52953,3.542e-01) (133905,2.021e-01) (333457,1.454e-01)
		};
		\addlegendentry{$d=4$}
		\addplot[magenta,mark=pentagon,mark size=3pt,mark options={solid}] coordinates {
(1,2.000e+00) (11,1.000e+00) (61,4.284e+00) (241,3.460e+00) (801,2.213e+00) (2433,1.927e+00) (7073,1.124e+00) (20073,6.769e-01) (55873,3.553e-01) (152713,2.150e-01) (409825,1.125e-01)
		};
		\addlegendentry{$d=5$}
		\addplot[brown,mark=triangle,mark size=3pt,mark options={solid,rotate=180}] coordinates {
(1,2.000e+00) (13,1.000e+00) (85,2.825e+00) (389,2.982e+00) (1457,1.777e+00) (4865,1.638e+00) (15241,8.475e-01) (46069,4.973e-01) (135905,2.869e-01) (392717,1.433e-01)
		};
		\addlegendentry{$d=6$}
		\addplot[darkgreen,mark=pentagon,mark size=3pt,mark options={solid,rotate=180}] coordinates {
(1,2.000e+00) (15,1.133e+00) (113,4.071e+00) (589,3.029e+00) (2465,2.257e+00) (9017,1.225e+00) (30409,7.863e-01) (97709,3.856e-01) (304321,2.018e-01) (925445,1.092e-01)
		};
		\addlegendentry{$d=7$}
		\addplot[cyan,mark=star,mark size=3pt,mark options={solid}] coordinates {
(1,2.000e+00) (17,1.000e+00) (145,4.239e+00) (849,2.796e+00) (3937,1.943e+00) (15713,1.172e+00) (56961,6.831e-01) (194353,3.266e-01) (637697,1.514e-01) (2034289,7.312e-02)
		};
		\addlegendentry{$d=8$}
		\addplot[black,mark=diamond,mark size=3pt,mark options={solid}] coordinates {
(1,2.000e+00) (19,1.000e+00) (181,3.851e+00) (1177,2.260e+00) (6001,1.601e+00) (26017,9.731e-01) (101185,5.611e-01) (366289,2.452e-01) (1264513,1.314e-01)
		};
		\addlegendentry{$d=9$}
		\end{axis}
		\end{tikzpicture}
	}
	\hfill
	\subfloat[$I\subset\{-64,-63,\ldots,64\}^d$ random frequency sets]{\label{fig:samples:rand:ratio:cbc}
		\begin{tikzpicture}[baseline]
		\begin{axis}[
		font=\footnotesize,
		enlarge x limits=true,
		enlarge y limits=true,
		height=0.33\textwidth,
		grid=major,
		width=0.48\textwidth,
		ymin=7e-3,ymax=1e1,
		xmode=log,
		ymode=log,
		xlabel={$|I|$},
		ylabel={\#samples$/M$},
		legend style={legend cell align=left,at={(1.0,1.52)}},
		legend columns = 3,
		]
		\addplot[black,mark=square,mark size=3pt,mark options={solid}] coordinates {
(10,2.905e+00) (100,1.010e+00) (1000,7.498e-01) (10000,9.086e+00)
		};
		\addlegendentry{$d=2$}
		\addplot[blue,mark=triangle,mark size=3pt,mark options={solid}] coordinates {
(10,3.167e+00) (100,1.054e+00) (1000,2.297e-01) (10000,1.012e-01) (100000,9.262e-01)
		};
		\addlegendentry{$d=3$}
		\addplot[red,mark=o,mark size=3pt,mark options={solid}] coordinates {
(10,3.133e+00) (100,1.174e+00) (1000,2.400e-01) (10000,4.722e-02) (100000,1.451e-02)
		};
		\addlegendentry{$d=4$}
		\addplot[magenta,mark=pentagon,mark size=3pt,mark options={solid}] coordinates {
(10,3.133e+00) (100,1.068e+00) (1000,3.048e-01) (10000,5.578e-02) (100000,7.452e-03)
		};
		\addlegendentry{$d=6$}
		\addplot[brown,mark=triangle,mark size=3pt,mark options={solid,rotate=180}] coordinates {
(10,2.478e+00) (100,1.124e+00) (1000,2.808e-01) (10000,4.341e-02) (100000,7.760e-03)
		};
		\addlegendentry{$d=10$}
		\addplot[darkgreen,mark=pentagon,mark size=3pt,mark options={solid,rotate=180}] coordinates {
(10,2.722e+00) (100,1.168e+00) (1000,2.920e-01) (10000,4.689e-02) (100000,7.742e-03)
		};
		\addlegendentry{$d=100$}
		\addplot[cyan,mark=star,mark size=3pt,mark options={solid}] coordinates {
(10,2.333e+00) (100,1.036e+00) (1000,2.155e-01) (10000,4.840e-02) (100000,8.894e-03)
		};
		\addlegendentry{$d=1000$}
		\addplot[black,mark=diamond,mark size=3pt,mark options={solid}] coordinates {
(10,2.864e+00) (100,1.166e+00) (1000,2.269e-01) (10000,4.653e-02) (100000,7.252e-03)
		};
		\addlegendentry{$d=10000$}
		\end{axis}
		\end{tikzpicture}
	}
	\caption{Ratio \#samples for deterministic reconstructing multiple rank-1 lattices suitable for approximation over lattice size $M$ of reconstructing single rank-1 lattice $\Lambda(\boldz,M,I)$, where $\Lambda(\boldz,M,I)$ was generated by \cite[Algorithm~3.7]{kaemmererdiss}.}
\end{figure}
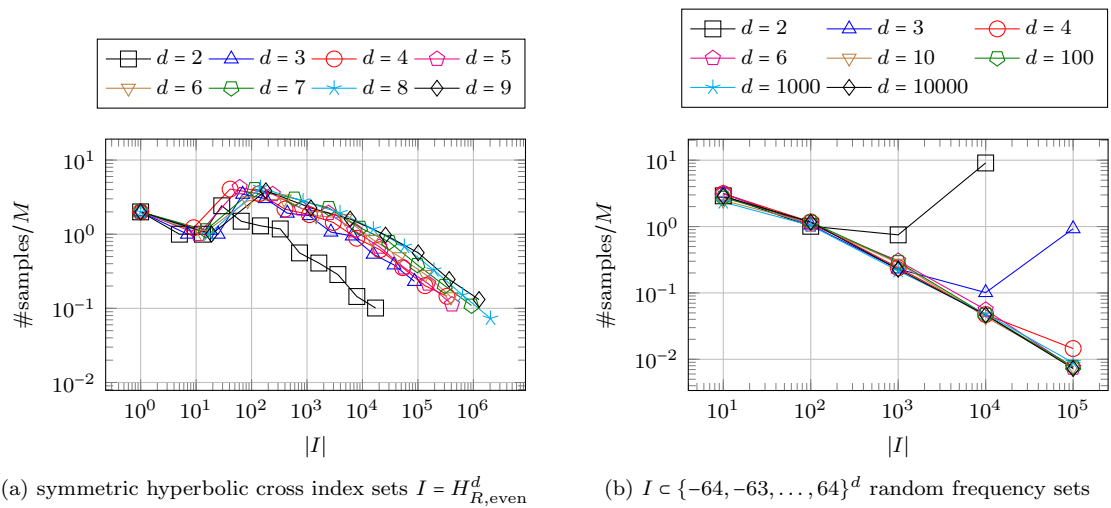

\FloatBarrier
\subsection{Comparison of reconstructing multiple and single rank-1 lattices for function approximation}\label{sec:numerics:symhceven:approx}

As mentioned in \cref{rem:ApproximationResult}, we can use the reconstruction process~\eqref{equ:MultRank1Cubature} to compute approximations of functions based on samples along rank-1 lattices.
We consider the tensor-product test functions
$G_3^d\colon\T^d\rightarrow\C$ from~\cite{KaPoVo13},
$G_3^d(\boldx):=\prod_{s=1}^{d} g_3(x_s)$,
where the one-dimensional function $g_3\colon\T\rightarrow\C$ is defined by
\begin{equation*}
g_3(x):= 4 \, \sqrt{\frac{3\pi}{207 \pi - 256}} \left(2+\sgn((x\bmod 1)-1/2) \, \sin(2\pi x)^3\right)
\end{equation*}
and
$\Vert G_3^d \Vert_{L_2(\T^d)}= 1$.
The function $G_3^d$ lies in a so-called Sobolev space of dominating mixed smoothness with smoothness almost $3.5$ such that its Fourier coefficients $(\widehat{G_3^d})_\boldk$ decay fast with respect to hyperbolic cross structures. In addition, $(\widehat{G_3^d})_\boldk=0$ if at least one component of $\boldk$ is odd.
Therefore, we approximate the function $G_3^d$ by multivariate trigonometric polynomials $S_I^{ \Lambda } G_3^d := \sum_{ \boldk \in I } (\widehat{G_3^d})^\Lambda_\boldk \, \e^{ 2\pi \ii \boldk \cdot \circ } $ with frequencies supported on modified hyperbolic cross index sets
$I=H_{R,\text{even}}^d$ as defined in~\eqref{equ:def:H_R_even}.
We compute the Fourier coefficients $(\widehat{G_3^d})^\Lambda_\boldk$ based on samples of~$G_3^d$ and determine the relative $L_2(\T^d)$ sampling errors
$\Vert G_3^d - S_I^{ \Lambda } G_3^d\Vert_{L_2(\T^d)} / \Vert G_3^d\Vert_{L_2(\T^d)}$,
where 
\begin{equation*}
\Vert G_3^d - S_I^{ \Lambda } G_3^d\Vert_{L_2(\T^d)}
=
\sqrt{
	\Vert G_3^d\Vert_{L_2(\T^d)}^2 - \sum_{\boldk\in I} \left|\left(\widehat{G_3^d}\right)_\boldk\right|^2
	+
	\sum_{\boldk\in I} \left|\left(\widehat{G_3^d}\right)_\boldk - \left(\widehat{G_3^d}\right)^\Lambda_\boldk\right|^2
}.
\end{equation*}
We compare the numerical results from \cite[Figure~4.3b]{KaVo19}, where reconstructing single rank-1 lattices and reconstructing random multiple rank-1 lattices were used, with new results using deterministic multiple rank-1 lattices returned by \cref{alg:main1}.

As input for \cref{alg:main1}, we use reconstructing single rank-1 lattices $\Lambda(\boldz,M,I)$ with generating vectors chosen according to~\eqref{eq:lat1}.
Instead of computing the Fourier coefficients~$(\widehat{G_3^d})^\Lambda_\boldk$ of the multivariate trigonometric polynomial~$S_I^{ \Lambda } G_3^d$ by~\eqref{equ:MultRank1Cubature}, we use \cite[Algorithm~2]{KaPoVo17}, which averages over all single rank-1 lattices $\Lambda(\boldz,\tilde P_\ell)$ that are able to reconstruct a Fourier coefficient~$\widehat{f_\boldk}$ of any multivariate trigonometric polynomial~$f$ as defined in~\eqref{equ:trigPolyf} for a given frequency~$\boldk\in I$, whereas~\eqref{equ:MultRank1Cubature} uses only one single rank\mbox{-}1 lattice $\Lambda(\boldz,\tilde P_{\nu(\boldk)})$.
Note that both computation methods are based on the same samples of $G_3^d$ along the obtained deterministic multiple rank-1 lattices. The resulting relative $L_2(\T^d)$ sampling errors are visualized for spatial dimensions $d\in\{2,3,5,8\}$ in \cref{fig:G_3:rel_sampl_err_L2} as solid lines and filled markers. We observe that the errors decrease rapidly for increasing expansion parameters~$R$ of the hyperbolic cross $I=H_{R,\text{even}}^d$ and correspondingly increasing number of samples. In addition, we consider reconstructing single rank-1 lattices generated by \cite[Algorithm~3.7]{kaemmererdiss} as input for \cref{alg:main1} and obtain results which are very close and therefore omit their plots.

\begin{figure}[!ht]
\begin{tikzpicture} \begin{loglogaxis}[enlargelimits=false,xmin=1,xmax=1e9,ymin=5e-11,ymax=2e0,ytick={1e-10,1e-8,1e-6,1e-4,1e-2,1},height=0.45\textwidth, width=0.95\textwidth, grid=major, xlabel={number of samples}, ylabel={$\Vert G_3^d - S_I^{ \Lambda } G_3^d\Vert_{L_2(\T^d)} / \Vert G_3^d\Vert_{L_2(\T^d)}$}, legend style={at={(0.35,1.02)}, anchor=south,legend columns=4,legend cell align=left, font=\footnotesize, },
]
\addplot[forget plot,dashed,mark options={solid},blue,mark=o,mark size=2.5] coordinates {
(5,6.973e-02) (13,1.939e-02) (41,1.854e-03) (145,2.324e-04) (545,2.846e-05) (2113,2.789e-06) (8321,2.796e-07) (33025,2.625e-08) (131585,2.437e-09) (525313,2.226e-10)
};
\addplot[forget plot,dotted,very thick,mark options={solid},blue,mark=*,mark size=2.25] coordinates {
	(11,6.759e-02) (29,1.386e-02) (59,1.688e-03) (131,2.390e-04) (1221,2.929e-05) (1991,2.856e-06) (7407,2.912e-07) (23127,2.758e-08) (43347,2.519e-09) (95289,2.374e-10) };
\addplot[blue,mark=*,mark size=2.5] coordinates {
(7,8.314e-02) (53,1.389e-02) (99,1.910e-03) (215,2.464e-04) (801,3.450e-05) (2071,2.998e-06) (4929,2.995e-07) (15495,2.937e-08) (40725,2.739e-09) (84909,2.523e-10) };
\addlegendentry{$d$=2}
\addplot[forget plot,dashed,mark options={solid},red,mark=square,mark size=2] coordinates {
(7,1.091e-01) (29,3.408e-02) (97,4.736e-03) (395,6.420e-04) (1721,9.394e-05) (5161,1.271e-05) (21569,1.514e-06) (85405,1.527e-07) (359213,1.525e-08) (1383595,1.464e-09) (5416219,1.385e-10)
};
\addplot[forget plot,dotted,very thick,mark options={solid},red,mark=square*,mark size=1.75] coordinates {
	(29,1.387e-01) (53,2.273e-02) (423,4.627e-03) (1827,6.747e-04) (2649,9.872e-05) (15557,1.367e-05) (32411,1.604e-06) (131163,1.577e-07) (344841,1.574e-08) (666907,1.533e-09) (1559059,1.448e-10)
};
\addplot[red,mark=square*,mark size=2] coordinates {
(7,1.091e-01) (29,3.408e-02) (279,4.894e-03) (1225,7.080e-04) (3811,1.035e-04) (9255,1.443e-05) (27637,1.682e-06) (73385,1.679e-07) (207443,1.673e-08) (529149,1.578e-09) (1256205,1.490e-10) };	
\addlegendentry{$d$=3}
\addplot[forget plot,dashed,mark options={solid},darkgreen,mark=diamond,mark size=2.5] coordinates {
(11,2.538e-01) (81,6.102e-02) (543,1.235e-02) (3079,2.513e-03) (14253,4.538e-04) (78167,8.217e-05) (404035,1.319e-05) (2328905,2.000e-06) (12181705,2.776e-07) (70968649,4.470e-08)
};
\addplot[forget plot,dotted,very thick,mark options={solid},darkgreen,mark=diamond*,mark size=2.25] coordinates {
 (81,1.793e-01) (679,5.123e-02) (1977,1.279e-02) (11291,2.512e-03) (39225,4.781e-04) (141919,8.373e-05) (401867,1.362e-05) (1229741,2.083e-06) (3665943,2.874e-07) (9836645,4.470e-08)
};
\addplot[darkgreen,mark=diamond*,mark size=2.5] coordinates {
(11,2.538e-01) (275,5.687e-02) (1877,1.435e-02) (6671,2.934e-03) (22383,5.170e-04) (80365,9.121e-05) (269495,1.447e-05) (823821,2.154e-06) (2600909,2.967e-07) (7529071,4.712e-08) };
\addlegendentry{$d$=5}
\addplot[forget plot,dashed,mark options={solid},brown,mark=hexagon,mark size=2.25] coordinates {
(17,6.810e-01) (255,1.087e-01) (2895,3.003e-02) (23375,7.830e-03) (184859,1.909e-03) (1248979,4.331e-04) (11051805,9.125e-05) (84391053,1.814e-05) (600266399,3.389e-06)
};
\addplot[forget plot,dotted,very thick,mark options={solid},brown,mark=hexagon*,mark size=2] coordinates {
	(37,3.336e-01) (1221,1.016e-01) (13763,2.974e-02) (63159,8.251e-03) (283521,1.976e-03) (1253697,4.470e-04) (5054425,9.463e-05) (17857181,1.868e-05) (65099159,3.485e-06) };
\addplot[brown,mark=hexagon*,mark size=2.25] coordinates {
(17,6.810e-01) (811,1.106e-01) (7089,3.385e-02) (44771,9.289e-03) (197643,2.094e-03) (898975,4.653e-04) (3552845,9.794e-05) (12583473,1.920e-05) (43452535,3.572e-06) };
\addlegendentry{$d$=8}
\end{loglogaxis}
\end{tikzpicture}
\caption{Relative $L_2(\T^d)$ sampling errors
for $G_3^d$ with respect to the number of samples for reconstructing single rank-1 lattices (dashed lines, unfilled markers), reconstructing random multiple rank-1 lattices (dotted lines, filled markers), and reconstructing deterministic multiple rank-1 lattices (solid lines, filled markers),
when using the frequency index sets $I:=H_{R,\mathrm{even}}^d$. Results for single rank-1 lattices from \cite[Figure~2.14]{volkmerdiss} and for reconstructing random multiple rank-1 lattices from \cite[Figure~4.3]{KaVo19}.
}
\label{fig:G_3:rel_sampl_err_L2}
\end{figure}
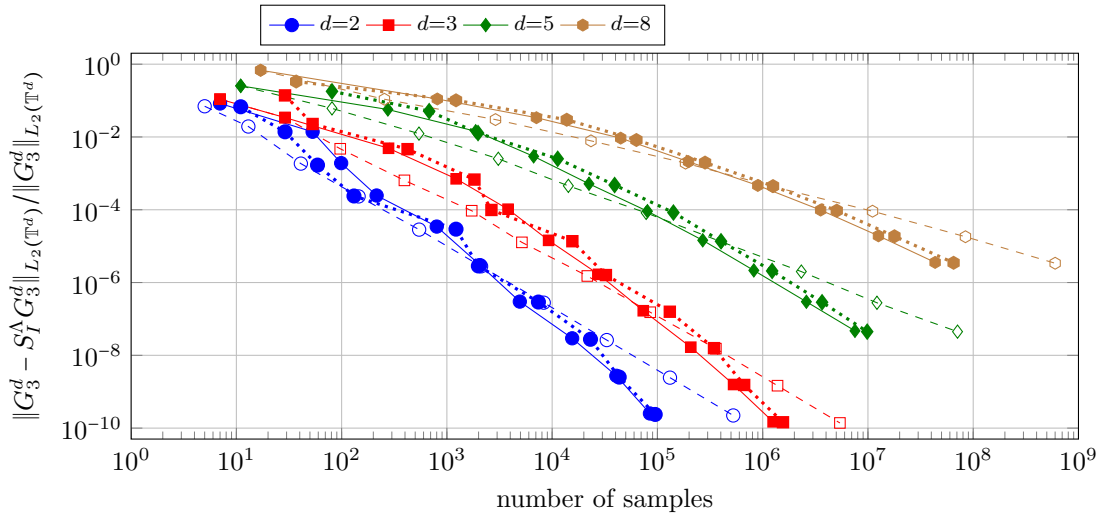

Moreover, the relative errors from \cite[Figure~4.3b]{KaVo19} when using reconstructing random multiple rank-1 lattices are shown in \cref{fig:G_3:rel_sampl_err_L2} as dotted lines and filled markers. We observe that the obtained number of samples and errors are similar to the deterministic ones. The results for the deterministic multiple rank-1 lattice seem to be slightly better for $d\in\{3,5,8\}$. In addition, the relative errors from \cite[Figure~4.3b]{KaVo19} when directly sampling along reconstructing single rank-1 lattices are drawn as dashed lines and unfilled markers. It has already been observed in~\cite{KaVo19} that in the beginning for smaller expansion parameters~$R$ and consequently smaller number of samples, the single rank-1 lattices perform better until a certain expansion parameter~$R$ has been reached. Afterwards, the multiple rank-1 lattices clearly outperform the single ones.

\FloatBarrier

\subsection{Deterministic multiple rank-1 lattices with decreasing lattice size for reconstruction of trigonometric polynomials}

Besides generating deterministic multiple rank-1 lattices according to \cref{thm:not_more_than_half_of_frequencies_collide,alg:main1}, we have also discussed the alternate approach of \cref{theorem:lattice_reduction}, where the theoretical results for function approximation, as mentioned in \cref{rem:ApproximationResult}, cannot be applied directly, but the number of required samples for the reconstruction of multivariate trigonometric polynomials may be distinctly smaller.

We start with symmetric hyperbolic cross type index sets $I=H_{R,\text{even}}^d$ and apply the generation strategy of \cref{theorem:lattice_reduction} on reconstructing single rank-1 lattices $\Lambda(\boldz,M,I)$ generated by \cite[Algorithm~3.7]{kaemmererdiss}. We visualize the resulting oversampling factors \#samples$\,/\,|I| = (1-L+\sum_{\ell=0}^{L-1} \tilde{P}_\ell)/|I|$ in \cref{fig:samples:symhceven_reduction:ratio:cbc} for spatial dimensions $d\in\{2,3,\ldots,9\}$ and various expansion parameters~$R$. For the considered test cases, we observe that the oversampling factors are well below 3. When starting with single rank-1 lattices according to~\eqref{eq:lat1}, the observed oversampling factors only differ slightly, cf.\ \cref{fig:samples:symhceven_reduction:ratio:lat1}.

\begin{figure}[!ht]
	\begin{center}
	\begin{tikzpicture}
	\begin{axis}[font=\footnotesize,hide axis,xmin=0,xmax=1,ymin=0,ymax=1,width=\textwidth,legend style={legend cell align=left},legend columns=8]
	\addlegendimage{black,mark=square,mark size=3pt,mark options={solid}}
	\addlegendentry{$d\!=\!2$}
	\addlegendimage{blue,mark=triangle,mark size=3pt,mark options={solid}}
	\addlegendentry{$d\!=\!3$}
	\addlegendimage{red,mark=o,mark size=3pt,mark options={solid}}
	\addlegendentry{$d\!=\!4$}
	\addlegendimage{magenta,mark=pentagon,mark size=3pt,mark options={solid}}
	\addlegendentry{$d\!=\!5$}
	\addlegendimage{brown,mark=triangle,mark size=3pt,mark options={solid,rotate=180}}
	\addlegendentry{$d\!=\!6$}
	\addlegendimage{darkgreen,mark=pentagon,mark size=3pt,mark options={solid,rotate=180}}
	\addlegendentry{$d\!=\!7$}
	\addlegendimage{cyan,mark=star,mark size=3pt,mark options={solid}}
	\addlegendentry{$d\!=\!8$}
	\addlegendimage{black,mark=diamond,mark size=3pt,mark options={solid}}
	\addlegendentry{$d\!=\!9$}
	\end{axis}
	\end{tikzpicture}
\end{center}
\vspace{-0.5em}
\noindent
\subfloat[{$\Lambda(\boldz,M,I)$ generated by \cite[Algorithm~3.7]{kaemmererdiss} for symmetric hyperbolic cross index sets $I=H_{R,\text{even}}^d$}]{\label{fig:samples:symhceven_reduction:ratio:cbc}
	\begin{tikzpicture}[baseline]
	\begin{axis}[
	font=\footnotesize,
	enlarge x limits=true,
	enlarge y limits=true,
	height=0.33\textwidth,
	grid=major,
	width=0.49\textwidth,
	ymin=1,ymax=3.3,
	xmode=log,
	xlabel={$|I|$},
	ylabel={\#samples$/|I|$},
	legend style={legend cell align=left, at={(1.0,1.38)}},
legend columns = 4,
	]
	\addplot[black,mark=square,mark size=3pt,mark options={solid}] coordinates {
(1,2.000e+00) (5,1.000e+00) (13,1.000e+00) (29,1.207e+00) (65,1.492e+00) (145,1.593e+00) (329,2.021e+00) (733,1.955e+00) (1633,1.936e+00) (3605,2.045e+00) (7913,2.050e+00) (17217,2.152e+00)
	};
\addplot[blue,mark=triangle,mark size=3pt,mark options={solid}] coordinates {
(1,2.000e+00) (7,1.000e+00) (25,1.160e+00) (69,1.841e+00) (177,1.633e+00) (441,1.952e+00) (1097,1.892e+00) (2693,2.162e+00) (6529,2.118e+00) (15645,2.187e+00) (37025,2.254e+00) (86593,2.294e+00)
	};
\addplot[red,mark=o,mark size=3pt,mark options={solid}] coordinates {
(1,2.000e+00) (9,1.222e+00) (41,1.634e+00) (137,1.934e+00) (401,2.062e+00) (1105,2.108e+00) (2977,2.133e+00) (7897,2.185e+00) (20609,2.257e+00) (52953,2.348e+00) (133905,2.338e+00) (333457,2.407e+00)
	};
\addplot[magenta,mark=pentagon,mark size=3pt,mark options={solid}] coordinates {
(1,2.000e+00) (11,1.000e+00) (61,1.557e+00) (241,2.095e+00) (801,2.086e+00) (2433,2.169e+00) (7073,2.219e+00) (20073,2.290e+00) (55873,2.284e+00) (152713,2.323e+00) (409825,2.437e+00)
	};
\addplot[brown,mark=triangle,mark size=3pt,mark options={solid,rotate=180}] coordinates {
(1,2.000e+00) (13,1.000e+00) (85,1.659e+00) (389,2.064e+00) (1457,2.200e+00) (4865,2.191e+00) (15241,2.192e+00) (46069,2.325e+00) (135905,2.403e+00) (392717,2.424e+00)
	};
\addplot[darkgreen,mark=pentagon,mark size=3pt,mark options={solid,rotate=180}] coordinates {
(1,2.000e+00) (15,1.133e+00) (113,1.779e+00) (589,2.097e+00) (2465,2.163e+00) (9017,2.267e+00) (30409,2.396e+00) (97709,2.389e+00) (304321,2.391e+00) (925445,2.476e+00)
	};
\addplot[cyan,mark=star,mark size=3pt,mark options={solid}] coordinates {
(1,2.000e+00) (17,1.000e+00) (145,1.731e+00) (849,2.069e+00) (3937,2.284e+00) (15713,2.337e+00) (56961,2.391e+00) (194353,2.425e+00) (637697,2.471e+00) (2034289,2.526e+00)
	};
\addplot[black,mark=diamond,mark size=3pt,mark options={solid}] coordinates {
(1,2.000e+00) (19,1.000e+00) (181,2.017e+00) (1177,2.105e+00) (6001,2.295e+00) (26017,2.328e+00) (101185,2.380e+00) (366289,2.454e+00) (1264513,2.522e+00)
	};
\end{axis}
	\end{tikzpicture}
}
\hfill
\subfloat[$\boldz$ and $M$ according to~\eqref{eq:lat1} for symmetric hyperbolic cross index sets $I=H_{R,\text{even}}^d$]{\label{fig:samples:symhceven_reduction:ratio:lat1}
	\begin{tikzpicture}[baseline]
	\begin{axis}[
	font=\footnotesize,
	enlarge x limits=true,
	enlarge y limits=true,
	height=0.33\textwidth,
	grid=major,
	width=0.49\textwidth,
	ymin=1,ymax=3.3,
	xmode=log,
	xlabel={$|I|$},
	ylabel={\#samples$/|I|$},
	legend style={legend cell align=left, at={(1.0,1.38)}},
legend columns = 4,
	]
	\addplot[black,mark=square,mark size=3pt,mark options={solid}] coordinates {
(1,2.000e+00) (5,1.400e+00) (13,1.462e+00) (29,1.483e+00) (65,2.354e+00) (145,1.759e+00) (329,1.845e+00) (733,1.742e+00) (1633,1.976e+00) (3605,1.963e+00) (7913,2.105e+00) (17217,2.134e+00) (37241,2.117e+00)
	};
\addplot[blue,mark=triangle,mark size=3pt,mark options={solid}] coordinates {
(1,2.000e+00) (7,1.000e+00) (25,1.160e+00) (69,1.870e+00) (177,1.915e+00) (441,2.116e+00) (1097,2.136e+00) (2693,2.080e+00) (6529,2.173e+00) (15645,2.195e+00) (37025,2.220e+00) (86593,2.287e+00) (200225,2.326e+00)
	};
\addplot[red,mark=o,mark size=3pt,mark options={solid}] coordinates {
(1,2.000e+00) (9,1.222e+00) (41,1.683e+00) (137,1.467e+00) (401,1.783e+00) (1105,1.981e+00) (2977,2.135e+00) (7897,2.194e+00) (20609,2.222e+00) (52953,2.296e+00) (133905,2.372e+00) (333457,2.363e+00) (818449,2.409e+00)
	};
\addplot[magenta,mark=pentagon,mark size=3pt,mark options={solid}] coordinates {
(1,2.000e+00) (11,1.000e+00) (61,1.623e+00) (241,1.963e+00) (801,2.144e+00) (2433,2.097e+00) (7073,2.201e+00) (20073,2.290e+00) (55873,2.316e+00) (152713,2.363e+00) (409825,2.381e+00)
	};
\addplot[brown,mark=triangle,mark size=3pt,mark options={solid,rotate=180}] coordinates {
(1,2.000e+00) (13,1.308e+00) (85,1.871e+00) (389,1.977e+00) (1457,2.123e+00) (4865,2.208e+00) (15241,2.218e+00) (46069,2.280e+00) (135905,2.349e+00) (392717,2.409e+00) (1112313,2.456e+00)
	};
\addplot[darkgreen,mark=pentagon,mark size=3pt,mark options={solid,rotate=180}] coordinates {
(1,2.000e+00) (15,1.133e+00) (113,2.097e+00) (589,2.022e+00) (2465,2.083e+00) (9017,2.211e+00) (30409,2.303e+00) (97709,2.373e+00) (304321,2.396e+00) (925445,2.484e+00)
	};
\addplot[cyan,mark=star,mark size=3pt,mark options={solid}] coordinates {
(1,2.000e+00) (17,1.000e+00) (145,1.566e+00) (849,2.105e+00) (3937,2.243e+00) (15713,2.255e+00) (56961,2.343e+00) (194353,2.393e+00) (637697,2.447e+00) (2034289,2.483e+00)
	};
\addplot[black,mark=diamond,mark size=3pt,mark options={solid}] coordinates {
(1,2.000e+00) (19,1.000e+00) (181,1.972e+00) (1177,2.059e+00) (6001,2.246e+00) (26017,2.321e+00) (101185,2.376e+00) (366289,2.419e+00) (1264513,2.482e+00)
	};
\end{axis}
	\end{tikzpicture}
}
\\[1em]
	\begin{center}
	\begin{tikzpicture}
	\begin{axis}[font=\footnotesize,hide axis,xmin=0,xmax=1,ymin=0,ymax=1,width=\textwidth,legend style={legend cell align=left},legend columns=5]
	\addlegendimage{black,mark=square,mark size=3pt,mark options={solid}}
	\addlegendentry{$d\!=\!2$}
	\addlegendimage{blue,mark=triangle,mark size=3pt,mark options={solid}}
	\addlegendentry{$d\!=\!3$}
	\addlegendimage{red,mark=o,mark size=3pt,mark options={solid}}
	\addlegendentry{$d\!=\!4$}
	\addlegendimage{magenta,mark=pentagon,mark size=3pt,mark options={solid}}
	\addlegendentry{$d\!=\!6$}
	\addlegendimage{brown,mark=triangle,mark size=3pt,mark options={solid,rotate=180}}
	\addlegendentry{$d\!=\!10$}
	\addlegendimage{darkgreen,mark=pentagon,mark size=3pt,mark options={solid,rotate=180}}
	\addlegendentry{$d\!=\!100$}
	\addlegendimage{cyan,mark=star,mark size=3pt,mark options={solid}}
	\addlegendentry{$d\!=\!1000$}
	\addlegendimage{black,mark=diamond,mark size=3pt,mark options={solid}}
	\addlegendentry{$d\!=\!10000$}
	\end{axis}
	\end{tikzpicture}
\end{center}
\vspace{-0.5em}
\noindent
\subfloat[{$\Lambda(\boldz,M,I)$ generated by \cite[Algorithm~3.7]{kaemmererdiss} for random frequency sets $I\subset\{-64,-63,\ldots,64\}^d$}]{\label{fig:samples:rand_reduction:ratio:cbc}
	\begin{tikzpicture}[baseline]
	\begin{axis}[
	font=\footnotesize,
	enlarge x limits=true,
	enlarge y limits=true,
	height=0.33\textwidth,
	grid=major,
	width=0.49\textwidth,
	ymin=1,ymax=3.3,
	xmode=log,
	xlabel={$|I|$},
	ylabel={\#samples$/|I|$},
legend style={legend cell align=left, at={(1.0,1.53)}},
	legend columns = 3,
	]
	\addplot[black,mark=square,mark size=3pt,mark options={solid}] coordinates {
(10,2.100e+00) (100,2.610e+00) (1000,2.672e+00) (10000,2.130e+00)
	};
\addplot[blue,mark=triangle,mark size=3pt,mark options={solid}] coordinates {
(10,2.300e+00) (100,2.470e+00) (1000,2.683e+00) (10000,2.785e+00) (100000,2.769e+00)
	};
\addplot[red,mark=o,mark size=3pt,mark options={solid}] coordinates {
(10,2.300e+00) (100,2.560e+00) (1000,2.725e+00) (10000,2.789e+00) (100000,2.848e+00)
	};
\addplot[magenta,mark=pentagon,mark size=3pt,mark options={solid}] coordinates {
(10,2.300e+00) (100,2.580e+00) (1000,2.655e+00) (10000,2.810e+00) (100000,2.841e+00)
	};
\addplot[brown,mark=triangle,mark size=3pt,mark options={solid,rotate=180}] coordinates {
(10,2.400e+00) (100,2.580e+00) (1000,2.715e+00) (10000,2.796e+00) (100000,2.844e+00)
	};
\addplot[darkgreen,mark=pentagon,mark size=3pt,mark options={solid,rotate=180}] coordinates {
(10,2.700e+00) (100,2.560e+00) (1000,2.685e+00) (10000,2.785e+00) (100000,2.850e+00)
	};
\addplot[cyan,mark=star,mark size=3pt,mark options={solid}] coordinates {
(10,2.100e+00) (100,2.530e+00) (1000,2.678e+00) (10000,2.792e+00) (100000,2.843e+00)
	};
\addplot[black,mark=diamond,mark size=3pt,mark options={solid}] coordinates {
(10,2.400e+00) (100,2.650e+00) (1000,2.693e+00) (10000,2.799e+00)
	};
\end{axis}
	\end{tikzpicture}
}
\hfill
\subfloat[$\boldz$ and $M$ according to~\eqref{eq:lat1} for random frequency sets $I\subset\{-64,-63,\ldots,64\}^d$]{\label{fig:samples:rand_reduction:ratio:lat1}
	\begin{tikzpicture}[baseline]
	\begin{axis}[
	font=\footnotesize,
	enlarge x limits=true,
	enlarge y limits=true,
	height=0.33\textwidth,
	grid=major,
	width=0.49\textwidth,
	ymin=1,ymax=3.3,
	xmode=log,
	xlabel={$|I|$},
	ylabel={\#samples$/|I|$},
	legend style={legend cell align=left, at={(1.0,1.53)}},
legend columns = 3,
	]
	\addplot[black,mark=square,mark size=3pt,mark options={solid}] coordinates {
(10,2.400e+00) (100,2.510e+00) (1000,2.672e+00) (10000,2.130e+00)
	};
\addplot[blue,mark=triangle,mark size=3pt,mark options={solid}] coordinates {
(10,3.300e+00) (100,2.670e+00) (1000,2.663e+00) (10000,2.784e+00) (100000,2.769e+00)
	};
\addplot[red,mark=o,mark size=3pt,mark options={solid}] coordinates {
(10,2.700e+00) (100,2.740e+00) (1000,2.716e+00) (10000,2.796e+00) (100000,2.850e+00)
	};
\addplot[magenta,mark=pentagon,mark size=3pt,mark options={solid}] coordinates {
(10,2.700e+00) (100,2.470e+00) (1000,2.729e+00) (10000,2.795e+00) (100000,2.845e+00)
	};
\addplot[brown,mark=triangle,mark size=3pt,mark options={solid,rotate=180}] coordinates {
(10,2.500e+00) (100,2.620e+00) (1000,2.693e+00) (10000,2.780e+00) (100000,2.842e+00)
	};
\addplot[darkgreen,mark=pentagon,mark size=3pt,mark options={solid,rotate=180}] coordinates {
(10,2.300e+00) (100,2.710e+00) (1000,2.737e+00) (10000,2.788e+00) (100000,2.846e+00)
	};
\addplot[cyan,mark=star,mark size=3pt,mark options={solid}] coordinates {
(10,2.800e+00) (100,2.530e+00) (1000,2.738e+00) (10000,2.808e+00) (100000,2.846e+00)
	};
\addplot[black,mark=diamond,mark size=3pt,mark options={solid}] coordinates {
(10,2.300e+00) (100,2.600e+00) (1000,2.689e+00) (10000,2.797e+00)
	};
\end{axis}
	\end{tikzpicture}
}
	\caption{Oversampling factors for deterministic reconstructing multiple rank-1 lattices constructed according to \cref{theorem:lattice_reduction}.}
\end{figure}
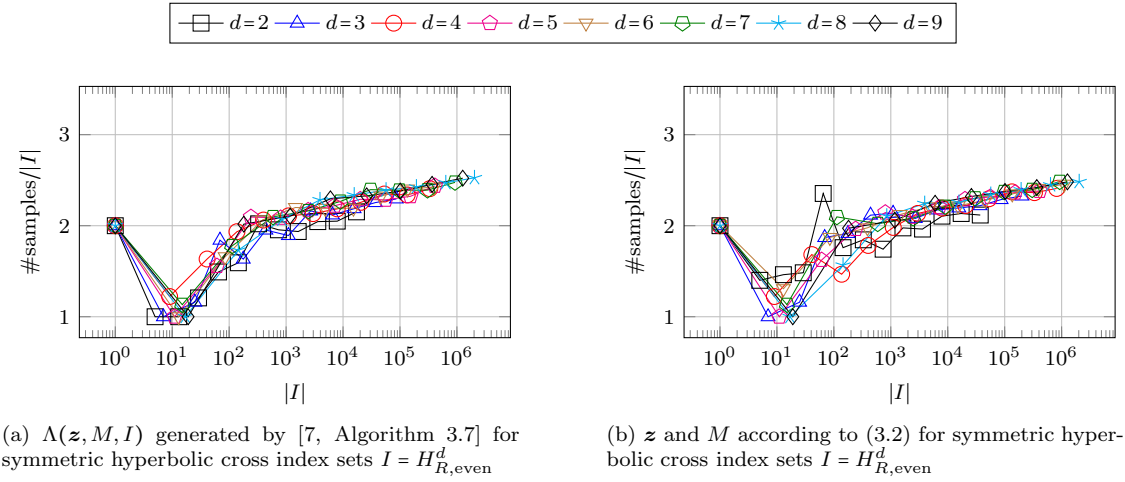
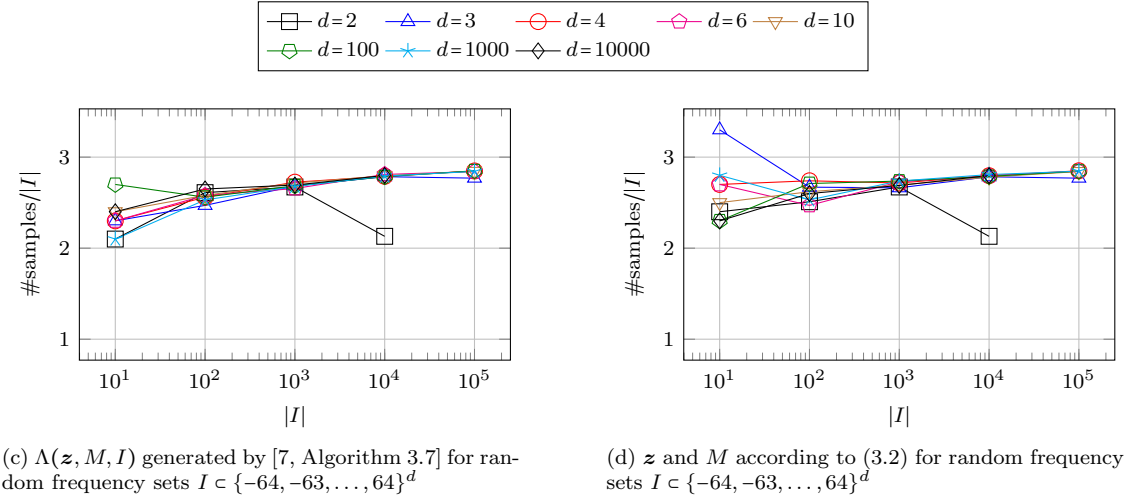

The reason for these very low oversampling factors is that during the generation process according to the proof of \cref{theorem:lattice_reduction} the prime $\tilde{P}_0$ is relatively close to $|I|$, the next prime $\tilde{P}_1$ is relatively close to $|I\setminus I_0|$, $\tilde{P}_2$ is relatively close to $|I\setminus (I_0\cup I_1)|$, and so on, where
$I_0$ contains the frequencies of $I$ which can be reconstructed by the lattice $\Lambda(\boldz,\tilde{P}_0)$ and where $I_1$ contains the frequencies of $I\setminus I_0$ which can be reconstructed by $\Lambda(\boldz,\tilde{P}_1)$. In particular, we do not have the fixed lower bound $s\leq\tilde{P}_\ell$ for all $\ell$ as in \cref{alg:main1}.

Next, we change the setting and use the frequency sets $I$ drawn uniformly randomly from cubes $[-R,R]^d\cap\Z^d$, see \cref{sec:numerics:lattices_alg1}. As before, we generate reconstructing single rank-1 lattices $\Lambda(\boldz,M, I)$ using \cite[Algorithm~3.7]{kaemmererdiss}. Then, we apply the strategy of \cref{theorem:lattice_reduction} in order to deterministically generate reconstructing multiple rank-1 lattices. We repeat the test 10 times for each setting with newly randomly chosen frequency sets~$I$ and determine the maximum number of samples over the 10 repetitions. For sparsities $|I|\in\{10,100,1\,000,10\,000\}$ in $d\in\{2,3,4,6,10,100,1\,000,10\,000\}$ spatial dimensions, we visualize the resulting oversampling factors in \cref{fig:samples:rand_reduction:ratio:cbc} for expansion parameter $R=64$ ($N_I\leq 128$). Starting with reconstructing single rank-1 lattices $\Lambda(\boldz,M, I)$ according to~\eqref{eq:lat1} as in \cref{fig:samples:rand:oversampling:lat1} changes the oversampling factors only slightly,
and the oversampling factors are still well below $4$, cf.\ \cref{fig:samples:rand_reduction:ratio:lat1}.

\section{Conclusion}
Given a known reconstructing single rank\mbox{-}1 lattice $ \Lambda(\boldz, M, I) $ for some $ d $-dimensional frequency set $ I \subset \Z^d $, we have presented two methods for deterministically transforming this lattice into smaller multiple rank\mbox{-}1 lattices.
By sampling on these lattices, any exact trigonometric polynomial with Fourier support in $ I $ can be exactly recovered by performing $ \OO{\log|I|} $ FFTs each of size bounded linearly in $ |I| $ and logarithmically in $ M \max_{ \boldk \in I } \norm{ \boldk }_1 $ up to some $\log\log$ terms. Slightly restricting the frequency set $I$ to being a subset of the $d$-dimensional $\ell_1$-ball of radius~$|I|$ and choosing suitable CBC constructed reconstructing rank-1 lattices $ \Lambda(\boldz, M, I) $ as input for Algorithm~\ref{alg:main1} actually leads to FFT sizes bounded in $\OO{|I|\log|I|}$.

Numerically, we have demonstrated that the deterministically generated multiple rank\mbox{-}1 lattices stemming from three different original reconstructing single rank\mbox{-}1 lattice constructions are competitive with single rank\mbox{-}1 lattices as well as randomly generated multiple rank\mbox{-}1 lattices in terms of number of samples used and accuracy of approximations.
These same results have additionally shown some gaps between the lattices generated in practice and the theoretical bounds, most notably when considering the oversampling factors, where for both structured and random frequency sets, these values were shown to grow only at most logarithmically in $ |I| $.

It is worth noting that though the construction of the multiple lattices happens sequentially (in that each successive choice of lattice size depends on the previous choices) the generating vector $ \boldz $ is kept constant and not adapted to previously chosen lattices.
Rather than remaining tied to the original reconstructing single rank\mbox{-}1 lattice, related work could take the path of generating new single rank\mbox{-}1 lattices which better serve the frequencies waiting to be handled after each step, so long as the potential gains in an adapted lattice scheme are carefully balanced with the associated computational cost.
\vspace*{-.75em}

\bibliographystyle{abbrv}
\bibliography{references_dmr1l_arXiv}

\end{document}